\newtheorem{theorem}{Theorem}
\newtheorem{proposition}{Proposition}
\newtheorem{lemma}{Lemma}
\def\neweq#1{\begin{equation}\label{#1}}
\def\endeq{\end{equation}}
\def\eq#1{(\ref{#1})}
\newcommand{\R}{\mathbb{R}}
\newcommand{\eps}{\varepsilon}
\newcommand{\weak}{\rightharpoonup}
\begin{document}

\title{Supercritical biharmonic equations
with power-type nonlinearity}

\author{Alberto Ferrero \\
Dipartimento di Matematica\\
Universit\`{a} di Milano-Bicocca\\
Via Cozzi 53\\
20125 Milano (Italy)
\and
Hans-Christoph Grunau\\
Fakult\"at f\"ur Mathematik\\
Otto-von-Guericke-Universit\"at\\
Postfach 4120\\
39016 Magdeburg (Germany)
\and
Paschalis Karageorgis\\
School of Mathematics\\
Trinity College\\
Dublin 2 (Ireland)
}

\date{}

\maketitle

\begin{abstract}
The biharmonic supercritical equation $\Delta^2u=|u|^{p-1}u$,
where $n>4$  and $p>(n+4)/(n-4)$, is studied in the whole
space $\mathbb{R}^n$ as well as in a modified form with
$\lambda(1+u)^p$ as right-hand-side with
an additional eigenvalue parameter $\lambda>0$
in the unit ball, in the latter case
together with Dirichlet boundary conditions. As for entire regular
radial solutions we prove oscillatory behaviour around the explicitly known
radial {\it singular} solution, provided $p\in((n+4)/(n-4),p_c)$, where
$p_c\in ((n+4)/(n-4),\infty]$ is a further critical exponent, which was
introduced in a recent work by Gazzola and the second author.
The third author proved already that these oscillations do not occur in
the complementing case, where $p\ge p_c$.

Concerning the Dirichlet problem we prove existence of at least one
singular solution with corresponding eigenvalue parameter. Moreover,
for the extremal solution in the bifurcation diagram for this nonlinear
biharmonic eigenvalue problem, we prove smoothness as long as $p\in((n+4)/(n-4),p_c)$.
\end{abstract}

\section{Introduction and main results}

In the present paper we consider  qualitative properties of
entire radial solutions (defined and regular in the whole space)
of the supercritical biharmonic equation
\begin{equation}\label{pde}
\Delta^2u=|u|^{p-1}u\qquad\mbox{in }\mathbb{R}^n,
\end{equation}
where $n\ge5$ and $p>\frac{n+4}{n-4}$.
An important role is played by the explicitly known entire solution
\begin{equation}\label{singularsolution}
u_s(r) =K_0^{1/(p-1)} r^{-4/(p-1)},
\end{equation}
where
\begin{equation}
K_0=\frac{4}{p-1}\left( \frac{4}{p-1}+2\right)
\left(n-2- \frac{4}{p-1}\right)\left(n-4- \frac{4}{p-1}\right).
\end{equation}
It was shown in \cite{dalmasso,GazzolaGrunau} that 
positive regular entire solutions to
(\ref{pde}) exist and that asymptotically they behave like the
singular solution $u_s$:
$$
\lim_{r\to\infty} \frac{u(r)}{u_s(r)}=1.
$$
Moreover, for $n>12$ a further critical exponent
$p_c\in\left(\frac{n+4}{n-4},\infty \right)$ was introduced being
in that interval the unique solution of the following
polynomial equation:
\begin{equation}\label{definepc}
p_c\cdot\frac{4}{p_c-1}\cdot\left( \frac{4}{p_c-1}+2\right)
\cdot\left(n-2- \frac{4}{p_c-1}\right)\cdot\left(n-4- \frac{4}{p_c-1}\right)
=\frac{n^2(n-4)^2}{16}.
\end{equation}
The third author \cite{Karageorgis} proved in particular that
in the ``supercritical case'', i.e
$$
p\ge p_c
$$
the convergence
of $u$ to $u_s$ is monotone, i.e. $\forall r:\ u(r)<u_s(r)$.
Here, we study the reverse case:

\begin{theorem}\label{theorem1}
Let $p_c\in((n+4)/(n-4),\infty)$ be the number, which is defined by
(\ref{definepc}) for $n\ge 13$. We assume that
$$
\frac{n+4}{n-4} <p<p_c \mbox{\ if\ }n\ge 13,\qquad
\frac{n+4}{n-4} <p<\infty \mbox{\ if\ }5\le n\le12.
$$
Let $r\mapsto u(r)$ be a radial entire solution to (\ref{pde}). Then,
as $r\to\infty$, $u(r)$ oscillates infinitely many times around the singular solution $u_s(r)$.
\end{theorem}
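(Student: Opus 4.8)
\medskip
\noindent The plan is to use the Emden--Fowler change of variables to turn \eqref{pde} into an autonomous fourth order ODE, analyse the linearisation at the equilibrium corresponding to $u_s$, and exploit that for $p<p_c$ the relevant characteristic exponents include a genuinely complex pair; the global hypothesis that $u$ is \emph{entire} (regular at $r=0$) will then be needed to exclude a non-oscillatory approach to $u_s$. Concretely, set $\alpha=4/(p-1)$, $t=\ln r$ and $v(t)=r^{\alpha}u(r)$. Since $r^{\alpha+4}\Delta^2(r^{-\alpha}v(\ln r))=Q(\partial_t)v$ with $Q(\mu)=(\mu-\alpha)(\mu-\alpha-2)(\mu-\alpha+n-2)(\mu-\alpha+n-4)$, equation \eqref{pde} becomes $Q(\partial_t)v=|v|^{p-1}v$; here $u_s$ corresponds to the constant solution $v\equiv v_\infty:=K_0^{1/(p-1)}$, the asymptotics $u(r)/u_s(r)\to1$ read $v(t)\to v_\infty$ as $t\to+\infty$ (and then, by interior estimates and Kolmogorov--Landau interpolation, $v',v'',v'''\to0$ as well), while regularity of $u$ at the origin gives $v(t)\to0$ and $v'(t),v''(t),v'''(t)\to0$ as $t\to-\infty$. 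Since $u_s$ is singular at $0$ we have $v\not\equiv v_\infty$, and it suffices to show that $w:=v-v_\infty$ changes sign infinitely often as $t\to+\infty$.

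Linearising at $v_\infty$ yields $Q(\partial_t)w=pK_0\,w+O(w^2)$, whose characteristic exponents are $\nu_j=m_j+\alpha$, where the $m_j$ solve $g(m):=m(m-2)(m+n-2)(m+n-4)=pK_0$. The polynomial $g$ is symmetric about $m=-\tfrac{n-4}{2}$, at which point it equals $\tfrac{n^2(n-4)^2}{16}$, the best constant in the Rellich inequality; and by the very definition \eqref{definepc} of $p_c$ the hypothesis ($\tfrac{n+4}{n-4}<p<p_c$ for $n\ge13$, $\tfrac{n+4}{n-4}<p<\infty$ for $5\le n\le12$) is \emph{equivalent} to $pK_0>\tfrac{n^2(n-4)^2}{16}$. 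Thus the level $pK_0$ lies strictly above the local maximum of $g$ on $(-(n-4),0)$, so $g(m)=pK_0$ has exactly two real solutions, $m_1<-(n-2)$ and $m_4>2$, together with a complex conjugate pair $m_{2,3}=-\tfrac{n-4}{2}\pm i\sigma$, $\sigma>0$. Hence $\nu_1<0<\nu_4$ are real while $\nu_{2,3}=b\pm i\sigma$ with $b=\alpha-\tfrac{n-4}{2}<0$ (supercriticality is used here) and $\nu_1<b<\nu_4$. So $v_\infty$ is hyperbolic, with a three-dimensional stable manifold $W^s(v_\infty)$ tangent to $\mathbb{R}\xi_1\oplus\Pi$ ($\xi_1$ the $\nu_1$-eigenvector, $\Pi$ the $b\pm i\sigma$-eigenplane) and a one-dimensional unstable manifold $W^u(v_\infty)$; at the equilibrium $0$ the exponents are $\alpha,\alpha+2>0$ and $\alpha-(n-2),\alpha-(n-4)<0$, so $W^u(0)$ is two-dimensional. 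In the original variable, $W^u(0)$ is exactly the family of solutions regular at the origin, whereas $W^u(v_\infty)$ consists of solutions that are \emph{singular} at the origin (there asymptotic to $u_s$).

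Suppose, for contradiction, that $w$ does not change sign infinitely often; up to replacing $w$ by $-w$ we may assume $w>0$ on a half-line, with $w\to0$. Then the orbit lies on $W^s(v_\infty)$; writing $w$ in spectral coordinates $(\xi,\eta)\in\mathbb{R}\times\mathbb{C}$ along $\mathbb{R}\xi_1\oplus\Pi$, one has $\xi'=\nu_1\xi+O(|\xi|^2+|\eta|^2)$ and $\eta'=(b+i\sigma)\eta+O(|\xi|^2+|\eta|^2)$, so if $\eta\not\equiv0$ then $|\eta(t)|\sim ce^{bt}$ with $\arg\eta(t)\to+\infty$; since $b>\nu_1$ this dominates the $\xi$-component and forces the scalar $w(t)=v(t)-v_\infty$ to change sign infinitely often, a contradiction. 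Hence $\eta\equiv0$, i.e.\ the orbit lies on the one-dimensional strong stable manifold $W^{ss}(v_\infty)$ tangent to $\xi_1$, so that $u\to u_s$ monotonically --- exactly the behaviour Karageorgis \cite{Karageorgis} established in the complementary range $p\ge p_c$. This possibility must now be excluded using that $u$ is entire: the orbit also lies on $W^u(0)$, hence is a heteroclinic from $0$ to $v_\infty$ contained in $W^u(0)\cap W^{ss}(v_\infty)$. By the inclination ($\lambda$-) lemma the two-dimensional $W^u(0)$ would then accumulate, near $v_\infty$, onto the one-dimensional local unstable manifold of $v_\infty$, i.e.\ a piece of the singular-at-origin family $W^u(v_\infty)$ would belong to the regular-at-origin family $W^u(0)$ --- impossible, since no orbit can be backward-asymptotic to the two distinct equilibria $0$ and $v_\infty$. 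Therefore $\eta\not\equiv0$, so $w$ changes sign infinitely often and $u(r)$ oscillates around $u_s(r)$ infinitely many times as $r\to\infty$.

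The hard part is precisely this last exclusion. It cannot be obtained by any purely local or ODE-asymptotic argument, because the linearised equation $\Delta^2\phi=pK_0 r^{-4}\phi$ genuinely admits the one-signed solution $\phi=r^{m_1}$ (decaying faster than $r^{-(n-2)}$), and likewise the nonlinear equation possesses solutions that decay monotonically to $v_\infty$ along $W^{ss}(v_\infty)$; it is only because $u$ is regular at the origin --- equivalently, because its orbit sits on the two-dimensional $W^u(0)$ and not merely on the stable side of $v_\infty$ --- that the monotone alternative can be ruled out. Turning the $\lambda$-lemma / transversality heuristic above into a rigorous argument (via uniqueness of local invariant manifolds, or a Melnikov-type non-degeneracy of the $0\to v_\infty$ heteroclinic, or a quantitative phase-plane estimate) is the technical heart of the theorem.
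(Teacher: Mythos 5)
Your setup is exactly the paper's: the Emden--Fowler change of variables, the autonomous system, the eigenvalue structure with one real negative exponent, a complex-conjugate pair with negative real part (thanks to $p<p_c$), and one real positive exponent, and the correct observation that oscillation follows from the complex pair unless the orbit is tangent, as $s\to\infty$, to the eigendirection of the most negative real eigenvalue. That last exclusion is precisely the paper's Proposition~3 (``\emph{impossible}''). But this is exactly where your proof stops being a proof: you candidly admit that the $\lambda$-lemma/transversality heuristic is not made rigorous, and indeed the argument as stated has concrete flaws. Accumulation of $W^u(0)$ onto $W^u(v_\infty)$ (were it to hold) does not imply that a piece of $W^u(v_\infty)$ \emph{lies in} $W^u(0)$, so there is no contradiction of the form ``an orbit backward-asymptotic to two equilibria''; moreover the inclination lemma requires transversal intersection with the full stable manifold $W^s(v_\infty)$, which you have not established, and nothing prevents $W^u(0)$ from simply meeting $W^{ss}(v_\infty)$ in a single orbit without any further global consequence. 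So the ``technical heart'' is genuinely missing.

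The paper fills this gap with a short, elementary ODE argument that you may find instructive because it avoids invariant-manifold machinery entirely. One first computes that an eigenvector $t=(t_1,t_2,t_3,t_4)$ for the most negative real eigenvalue (paper's $\nu_2$, your $\nu_1$) has strictly alternating signs $t_1>0>t_2$, $t_3>0>t_4$; the key inequality behind this is $N_3>(n-2)^2(p-1)^4$, which factors as $8p(p+1)\bigl((n-2)(p-1)-4\bigr)\bigl((n-4)(p-1)-4\bigr)>0$ under supercriticality. Then, writing $z_i=w_i-w_i^{(0)}$ and choosing $s_0$ large so that $(z_1,z_2,z_3,z_4)(s_0)$ has the sign pattern $(+,-,+,-)$ (or its negative), one integrates the system backward: the structure of the four first-order factors and the monotonicity of $x\mapsto|x|^{p-1}x$ show that this sign pattern \emph{persists for all $s\le s_0$}. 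In the $(+,-,+,-)$ case $z_1>0$ for all $s\le s_0$, i.e.\ $u>u_s$ for all small $r$, so $u$ is singular at the origin. In the $(-,+,-,+)$ case one extracts a quantitative lower bound forcing $z_1(s)\le -\delta\,e^{-(n-2-\frac{4}{p-1})s}\to-\infty$, again giving a singularity at $r=0$. Either way, a trajectory tangent to that eigenvector cannot come from an entire regular solution, which closes the argument. Rewriting your proof along these lines would turn the identified gap into a complete proof without any appeal to the $\lambda$-lemma.
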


We study also existence of singular solutions as well as
qualitative properties of positive solutions
of  the corresponding Dirichlet problem
\begin{equation}\label{Dirichlet}
\left\{\begin{array}{ll}
\Delta^2u=\lambda (1+u)^ p \ & \mbox{in }B,\\
u>0 &   \mbox{in }B,\\
u=|\nabla u|=0 & \mbox{on }\partial B,
\end{array}\right.
\end{equation}
where $B\subset \mathbb{R}^n$ is the unit ball, $\lambda>0$ is an
eigenvalue parameter and again
$n\ge5$ and $p>\frac{n+4}{n-4}$.
In \cite{FerreroGrunau} (see also \cite{bg})
it was proved that there exists an extremal parameter
$\lambda^*$ such that for $\lambda\in[0,\lambda^*)$ one has a minimal
solution which is regular, while not even a weak solution does exist for $\lambda
>\lambda^*$. On the extremal parameter $\lambda=\lambda^*$, an extremal
solution $u^*\in H^2_0(B)\cap L^p(B)$ exists as monotone limit of the minimal solutions.
It is expected that also in the Dirichlet problem, a singular  (i.e. unbounded)
solution $u_{\sigma}$ corresponding
to a suitable singular parameter $\lambda_{\sigma}$ exists and will play an important role
as far as the shape of the bifurcation diagram for (\ref{Dirichlet}) is concerned.
However, in  \cite{FerreroGrunau} we had to leave open even the existence of a singular
solution which will be proved in the present paper:

\begin{theorem} \label{exss}
Let $n>4$ and $p>(n+4)/(n-4)$. Then, there exists a parameter $\lambda_{\sigma}>0$ such that
for $\lambda=\lambda_{\sigma}$, problem $(\ref{Dirichlet})$ admits a radial singular solution.
\end{theorem}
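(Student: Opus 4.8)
\proof
The plan is to deduce Theorem~\ref{exss} from a statement about the autonomous equation obtained by removing the shift and the parameter.

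\emph{Reduction.} Putting $v:=1+u$ transforms \eq{Dirichlet} into
\[
\Delta^2 v=\lambda v^p\ \mbox{ in }B,\qquad v>1\ \mbox{ in }B,\qquad v=1,\ \nabla v=0\ \mbox{ on }\partial B .
\]
The equation $\Delta^2V=V^p$ is invariant under $V(x)\mapsto a^{-1}V(Rx)$, which maps $B_R$ to $B_1$, the boundary value $a$ to $1$, and the coefficient to $a^{p-1}R^4$. Hence it is enough to produce, for \emph{some} $R>0$, a radial solution $V$ of $\Delta^2V=V^p$ on $(0,R)$ which is \emph{singular at the origin}, is \emph{strictly decreasing} on $(0,R)$, and has an interior critical point at $r=R$, i.e.\ $V'(R)=0$ with $V(R)>0$. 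Then $v(x):=V(R)^{-1}V(R|x|)$ solves the displayed problem with $\lambda_\sigma:=V(R)^{p-1}R^4>0$, one has $v>1$ in $B$ because $V$ decreases from $+\infty$ to $V(R)>0$, and $u_\sigma:=v-1$ is the required singular solution of \eq{Dirichlet}. Thus everything reduces to exhibiting one radial singular solution of $\Delta^2V=V^p$ which fails to be monotone on its interval of existence.

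\emph{The one--parameter family of singular solutions.} I would start from the explicit singular solution $V_s(r)=K_0^{1/(p-1)}r^{-4/(p-1)}$ of $\Delta^2V=V^p$ (cf.\ \eq{singularsolution}), of exponent $\sigma_0:=-4/(p-1)$. Linearising at $V_s$ gives the Euler operator $\Delta^2-pK_0r^{-4}$, whose indicial polynomial $\sigma\mapsto\sigma(\sigma-2)(\sigma+n-2)(\sigma+n-4)-pK_0$ has — by the same root analysis that underlies \eq{definepc} and Theorem~\ref{theorem1} — exactly one zero $\sigma_+$ with $\sigma_+>\sigma_0$, and in fact $\sigma_+>2$, while the other three zeros have real part $<\sigma_0$ (so they correspond to perturbations that are \emph{more} singular than $V_s$ and must be discarded). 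Passing to the Emden--Fowler form $V=r^{\sigma_0}w(t)$, $t=\ln r$, the constant equilibrium $w\equiv K_0^{1/(p-1)}$ of the resulting autonomous fourth order ODE therefore has a one--dimensional unstable manifold, which yields a one--parameter family $\{V_b\}_{b>0}$ of radial solutions of $\Delta^2V=V^p$, singular at the origin with
\[
V_b(r)=K_0^{1/(p-1)}r^{-4/(p-1)}+b\,r^{\sigma_+}+o(r^{\sigma_+})\qquad(r\to0),
\]
defined on maximal intervals $(0,R_b)$, depending continuously on $b$, with $V_b>V_s$ and $V_b'<0$ near the origin. This family I would obtain either from invariant--manifold theory for the autonomous system or directly by a contraction argument for the singular initial value problem.

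\emph{The interior critical point, and the main difficulty.} Since $V_b'(r)=r^{\sigma_0-1}\bigl(w_b'(t)-\tfrac{4}{p-1}w_b(t)\bigr)$, the first interior critical point of $V_b$ is the first time the trajectory $w_b$ on the unstable manifold meets the cone $w'=\tfrac{4}{p-1}w$; as long as this has not happened, $V_b$ is strictly decreasing and stays positive. Near $t=-\infty$ one has $w_b\to K_0^{1/(p-1)}$, $w_b'\to0$, $w_b$ increasing; at the far end of the unstable branch either $V_b$ blows up at $R_b<\infty$, where the blow--up asymptotics $V_b\sim c(R_b-r)^{-4/(p-1)}$ force $w_b'/w_b\to+\infty$, or $V_b$ is globally defined, in which case I would use a monotonicity/energy identity for the radial biharmonic operator (in the spirit of \cite{Karageorgis}) to exclude $w_b'/w_b$ from remaining below $\tfrac{4}{p-1}$ for all $t$. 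In either case $w_b'/w_b$ exceeds $\tfrac{4}{p-1}$ somewhere while $w_b$ is still positive, so $V_b$ has a first interior critical point $R\in(0,R_b)$ with $V_b(R)>0$ and $V_b$ strictly decreasing on $(0,R)$; the rescaling of the first step and $u_\sigma=v-1$ then finish the proof, with $\lambda_\sigma=V_b(R)^{p-1}R^4$. The hard part is precisely this last step: the first critical point of $V_b$ occurs where $b\,r^{\sigma_+}$ and $V_s$ are of the same order, so the linearisation only suggests that the crossing of the cone takes place; what is actually needed is a genuine global control of the one--dimensional unstable manifold of the autonomous system — equivalently, a shooting argument for the radial ODE together with a priori bounds ruling out either an earlier zero of $V_b$ or a homoclinic--type return to $V_s$. \endproof
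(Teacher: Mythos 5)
You take a genuinely different route from the paper, and at the decisive step your argument has a gap that you yourself flag. The paper never tries to construct the singular ODE solution directly from the linearisation. Instead it works entirely from the \emph{regular} side: it takes the shooting solutions $U_\gamma$ of \eqref{initial} for $\gamma\in(\overline\gamma,0)$, which by Lemma~\ref{gazgru}\,(iii) have a unique interior minimum at $R_\gamma$, and rescales them via \eqref{defugamma} into regular Dirichlet solutions $u_\gamma$ of \eqref{Dirichlet3} with parameter $\lambda_\gamma=R_\gamma^4U_\gamma(R_\gamma)^{p-1}$. It then proves $R_\gamma\to\infty$ as $\gamma\downarrow\overline\gamma$ (Lemma~\ref{gamgam}), shows $\lambda_\gamma$ stays bounded below using the known asymptotics of $U_{\overline\gamma}$, derives the uniform pointwise bound $u_\gamma(x)\le(\lambda^*/C)^{1/(p-1)}|x|^{-4/(p-1)}-1$ from Lemma~\ref{supcrit}, and from this a uniform $H^2_0$-bound; a weak limit $u$ is extracted and shown to be unbounded by a mean-value argument. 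The singular solution thus appears as a limit of regular Dirichlet solutions, with the global structure borrowed wholesale from \cite{GazzolaGrunau}.

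Your plan is the converse: build a one-parameter family $V_b$ of solutions on the one-dimensional unstable manifold of $w^{(0)}$, singular at the origin with expansion $V_b(r)=K_0^{1/(p-1)}r^{-4/(p-1)}+br^{\sigma_+}+o(r^{\sigma_+})$, and then show $V_b$ has a first interior critical point $R$ with $V_b(R)>0$. The reduction, the identification $\sigma_+=\nu_1-4/(p-1)$, and the inequality $\sigma_+>2$ are all correct (the last one is exactly the inequality $N_3>(n-2)^2(p-1)^4$ verified in the paper's lemma on the signs of the $\nu_2$-eigenvector). The problem is the next step. In the autonomous variables, you must show that the unstable-manifold trajectory, which starts with $w_2<0$, crosses $\{w_2=0\}$ while $w_1>0$. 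You offer two scenarios, finite-time blow-up forcing a crossing, or a proposed ``monotonicity/energy identity'' in the global case, but neither is carried out, and you do not exclude the remaining possibility that $V_b$ decreases through $0$ and blows down to $-\infty$ without ever having a critical point (the analogue of the behaviour in Lemma~\ref{gazgru}\,(ii)). You acknowledge explicitly that what is ``actually needed is a genuine global control of the one-dimensional unstable manifold,'' and that control is precisely the hard analysis. As written, the proof reduces Theorem~\ref{exss} to an unproven global claim about the ODE, which the paper circumvents by approximating from the regular side where the global behaviour is already classified; so the argument is incomplete at its core.
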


Moreover, in  \cite{FerreroGrunau} we
 left open whether the extremal solution $u^*$ introduced above
 is singular (unbounded) or regular (bounded).
The corresponding question has been settled for the exponential nonlinearity by
Davila, Dupaigne, Guerra and Montenegro \cite{DDGM} thereby developing
the previous work \cite{aggm}.
Here, taking advantage of an idea in \cite{DDGM}, we prove regularity
of the extremal solution of the problem with power-type nonlinearity in the ``subcritical'' range.

\begin{theorem}\label{theorem2}
Let $p_c\in((n+4)/(n-4),\infty)$ be the number, which is defined by
(\ref{definepc}) for $n\ge 13$. We assume that
$$
\frac{n+4}{n-4} <p<p_c \mbox{\ if\ }n\ge 13,\qquad
\frac{n+4}{n-4} <p<\infty \mbox{\ if\ }5\le n\le12.
$$
Let $ u^*\in H^2_0(B)\cap L^p(B)$ be the extremal radial solution of  (\ref{Dirichlet})
corresponding to the extremal parameter $ \lambda^*$, which is obtained
as monotone limit of the minimal regular solutions for $\lambda\nearrow\lambda^*$.
Then, $u^*$ is regular.
\end{theorem}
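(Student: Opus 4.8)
The plan is to argue by contradiction, combining the semi-stability of the extremal solution with the optimality of the Hardy--Rellich inequality
\[
\int_B|\Delta\varphi|^2\,dx\ \ge\ \frac{n^2(n-4)^2}{16}\,\int_B\frac{\varphi^2}{|x|^4}\,dx,\qquad\varphi\in H^2_0(B),
\]
whose constant is sharp and not attained and which, by the scale invariance of the quotient, stays sharp (and non-attained) when $\varphi$ is required to be supported in an arbitrarily small ball. The mechanism is that in the range under consideration the singular profile is ``too large'' for this inequality. Indeed, writing $\gamma:=4/(p-1)\in(0,\tfrac{n-4}{2})$ one checks the identity $pK_0=(\gamma+2)(\gamma+4)(n-2-\gamma)(n-4-\gamma)$; comparing with the defining equation \eq{definepc} of $p_c$ — together with the fact that $p_c$ is its unique root in $(\tfrac{n+4}{n-4},\infty)$ (respectively that there is no such root when $5\le n\le 12$) and the sign of this quartic at the endpoints $\gamma=0$ and $\gamma=\tfrac{n-4}{2}$ — one sees that the hypothesis on $p$ is exactly equivalent to
\[
pK_0\ >\ \frac{n^2(n-4)^2}{16}.
\]

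First I would record that $u^*$ is semi-stable: being the increasing limit of the minimal regular solutions $u_\lambda$, whose linearized operators are non-negative on $H^2_0(B)$, one obtains by passing to the limit in the stability inequality along the minimal branch (see \cite{FerreroGrunau})
\[
\int_B|\Delta\varphi|^2\,dx\ \ge\ \lambda^*p\int_B(1+u^*)^{p-1}\varphi^2\,dx\qquad\text{for all }\varphi\in H^2_0(B).
\]
The heart of the matter is then the following: assuming $u^*$ unbounded, one must show that it blows up at the origin \emph{with the sharp constant},
\[
\lim_{r\to0}r^{4/(p-1)}u^*(r)\ =\ L_0:=\Bigl(\frac{K_0}{\lambda^*}\Bigr)^{1/(p-1)}.
\]
This is a local statement about the positive radial solution $u^*$ of $\Delta^2u^*=\lambda^*(1+u^*)^p$ on the punctured ball, and it is the analogue near $r=0$ of the behaviour of entire solutions near $r=\infty$ used for Theorem~\ref{theorem1}. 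A classification of the possible isolated singularities does the job: matching exponents in the radial equation — $\Delta^2(r^{-\beta})$ is a multiple of $r^{-\beta-4}$ while $(1+r^{-\beta})^p$ behaves like $r^{-\beta p}$ — forces the blow-up rate to be $\beta=\gamma$, and the Emden--Fowler substitution $u^*(r)=r^{-\gamma}w(-\log r)$ reduces the equation to an asymptotically autonomous fourth-order ODE whose only admissible equilibrium is $w\equiv L_0$ (for $p<p_c$ the approach is oscillatory, but only the value of the limit is needed). This is exactly the type of analysis developed in \cite{GazzolaGrunau,Karageorgis}, and it is here that the restriction to radial solutions and the idea borrowed from \cite{DDGM} enter; I expect the identification of the \emph{sharp} constant $L_0$ to be the main obstacle. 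Granting it, for every $\eps>0$ there is $\rho\in(0,1)$ such that
\[
\lambda^*p\,(1+u^*(x))^{p-1}\ \ge\ pK_0\,(1-\eps)\,|x|^{-4}\qquad\text{for }0<|x|<\rho.
\]

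Finally I would close the argument. Inserting any $\varphi\in C^\infty_c(B_\rho\setminus\{0\})\subset H^2_0(B)$ into the semi-stability inequality and using the last bound yields
\[
\int_{B_\rho}|\Delta\varphi|^2\,dx\ \ge\ pK_0\,(1-\eps)\int_{B_\rho}\frac{\varphi^2}{|x|^4}\,dx
\]
for every such $\varphi$. Since $p<p_c$ gives $pK_0>\tfrac{n^2(n-4)^2}{16}$, one may fix $\eps$ so small that $pK_0(1-\eps)>\tfrac{n^2(n-4)^2}{16}$; but this contradicts the optimality of the Hardy--Rellich constant among functions supported in the small ball $B_\rho$. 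Hence $u^*\in L^\infty(B)$, so that $\Delta^2u^*=\lambda^*(1+u^*)^p\in L^\infty(B)$, and elliptic regularity together with a standard bootstrap (the right-hand side being a smooth function of the now bounded $u^*$) give $u^*\in C^\infty(\overline B)$; that is, $u^*$ is regular. $\Box$
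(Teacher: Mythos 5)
Your overall strategy is exactly the one in the paper: pass to the limit in the stability inequality of the minimal branch to get semi-stability of $u^*$, obtain a lower bound of the form $(1+u^*(x))^{p-1}\gtrsim |x|^{-4}$ under the assumption that $u^*$ is singular, observe that the subcritical hypothesis $p<p_c$ is equivalent to $pK_0>n^2(n-4)^2/16$, and conclude by contradiction with the optimality of the Hardy--Rellich constant. Your algebraic rewriting $pK_0=(\gamma+2)(\gamma+4)(n-2-\gamma)(n-4-\gamma)$ with $\gamma=4/(p-1)$ and the remark that the inequality only needs to be tested on $\varphi$ supported near the origin (where the constant stays sharp by scaling) are both correct.

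However, there is a genuine gap at precisely the step you flag as ``the main obstacle''. The argument hinges on the lower bound near $r=0$ for the hypothetical singular extremal solution, namely that $(1+u^*(r))\ge c\,r^{-4/(p-1)}$ with the constant $c=(K_0/\lambda^*)^{1/(p-1)}$ asymptotically (or even pointwise). You sketch how one might see this via exponent-matching and an Emden--Fowler substitution, but you do not actually prove that the only admissible asymptotic behaviour of a radial singular solution is $r^{4/(p-1)}u^*(r)\to (K_0/\lambda^*)^{1/(p-1)}$, nor do you rule out slower singular behaviour or degenerate limits of the rescaled profile. This is not a routine local ODE estimate; it is the content of \cite[Theorem 5]{FerreroGrunau}, which the paper invokes to obtain the clean pointwise inequality $u^*(x)>(K_0/\lambda^*)^{1/(p-1)}|x|^{-4/(p-1)}-1$ on all of $B$. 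Without that input (or an actual proof of the local asymptotics you state), the chain from ``$u^*$ singular'' to ``$\int|\Delta\varphi|^2\ge pK_0(1-\eps)\int\varphi^2|x|^{-4}$'' is not established, and the contradiction does not follow. If you cite the singular lower bound from \cite{FerreroGrunau}, your argument becomes essentially identical to the paper's; as written, the proof is incomplete at its crucial step.

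A minor remark: you do not need to restrict to test functions supported in a small punctured ball. The pointwise lower bound from \cite{FerreroGrunau} holds on all of $B$, so the inequality $\int_B|\Delta\varphi|^2\ge pK_0\int_B\varphi^2|x|^{-4}$ holds for all $\varphi\in C_0^\infty(B)$, and the Hardy--Rellich optimality on $B$ itself already gives the contradiction. Your local formulation is correct but not necessary.
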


Related results for the corresponding second order problems were obtained e.g.
in \cite{bcmr,bv,mignot,W}.

\section{Entire solutions: The corresponding autonomous system}

Here we study qualitative properties of entire radial solutions $r\mapsto u(r)$
to (\ref{pde}) and shall prove Theorem~\ref{theorem1}.
We put
\begin{equation}
v(s):=e^{4s/(p-1)}u(e^s)\quad (s\in \mathbb{R}),\qquad
u(r)=r^{-4/(p-1)} v(\log r) \quad (r>0).
\end{equation}
According to \cite{GazzolaGrunau,Karageorgis}, (\ref{pde}) is then
equivalent to
\begin{equation}\label{ode}
\left(\partial_s  -\frac{4}{p-1}+n-4\right)\left( \partial_s  -\frac{4}{p-1}+n-2\right)
\left(\partial_s   -\frac{4}{p-1}-2\right)\left( \partial_s  -\frac{4}{p-1}\right)v(s)
=|v(s)|^{p-1}v(s),
\end{equation}
$s\in \mathbb{R}$.
In order to write this as an autonomous system, we define
\begin{equation}
\left\{\begin{array}{rcl}
w_1(s) &=& v(s)\\
w_2(s) &=&\left( \partial_s  -\frac{4}{p-1}\right)w_1(s)\\
w_3(s)&=&\left(\partial_s   -\frac{4}{p-1}-2\right)w_2(s)\\
w_4(s)&=& \left( \partial_s  -\frac{4}{p-1}+n-2\right)w_3(s).
\end{array}\right.
\end{equation}
Equation (\ref{ode}) is equivalent to the following system:
\begin{equation}\label{autonomous}
\left\{\begin{array}{rcl}
w_1'(s) &=&\frac{4}{p-1}w_1+w_2,\\
w_2'(s) &=&\left(\frac{4}{p-1}  +2\right)w_2+w_3,\\
w_3'(s)&=&\left(\frac{4}{p-1}  -(n-2)\right)w_3+w_4,\\
w_4'(s)&=&|w_1(s)|^{p-1}w_1(s)+\left(\frac{4}{p-1}-(n-4)  \right)w_4.
\end{array}\right.
\end{equation}
In order to perform the stability analysis around the singular solution
$u_s(r)=K_0^{1/(p-1)}r^{-4/(p-1)}$, i.e. $v(s)=K_0^{1/(p-1)}$,
we have to linearize (\ref{autonomous})
around the vector
$$
w^{(0)}:= K_0^{1/(p-1)}\left(1,-\frac{4}{p-1},
\frac{4}{p-1}\left(\frac{4}{p-1}+2\right),
\left( n-2-\frac{4}{p-1} \right)
\frac{4}{p-1}\left(\frac{4}{p-1}+2\right)
\right)
$$
and come up with the system $w'(s)=M\circ w(s)$ where
$$
M:=\begin{pmatrix}
\frac{4}{p-1}&1&0&0\\
0& \frac{4}{p-1}+2&1&0\\
0&0&\frac{4}{p-1}-(n-2)&1\\
pK_0 &0&0&\frac{4}{p-1}-(n-4)\\
\end{pmatrix}.
$$
The corresponding characteristic polynomial is given by
$$
P(\nu)=\left(\nu  -\frac{4}{p-1}+n-4\right)\left( \nu  -\frac{4}{p-1}+n-2\right)
\left(\nu   -\frac{4}{p-1}-2\right)\left( \nu  -\frac{4}{p-1}\right)-pK_0.
$$
According to \cite{GazzolaGrunau},  the eigenvalues are given by
\begin{eqnarray*}
\nu_1= \frac{N_1 + \sqrt{N_2 + 4\sqrt{N_3} } }{2(p-1)},&\quad&
\nu_2= \frac{N_1 -  \sqrt{N_2 +  4\sqrt{N_3} } }{2(p-1)},\\
\nu_3= \frac{N_1 + \sqrt{N_2  - 4\sqrt{N_3} } }{2(p-1)},&\quad&
\nu_4= \frac{N_1 -  \sqrt{N_2  - 4\sqrt{N_3} } }{2(p-1)},
\end{eqnarray*}
where
$$N_1:=-(n-4)(p-1)+8,\qquad N_2:=(n^2 -4n +8) (p-1)^2,$$
\begin{eqnarray*}
N_3&:=& (9n-34)(n-2)\, (p-1)^4+8(3n-8)(n-6)\,(p-1)^3\\
         &&   +(16 n^2 -288n +  832) \, (p-1)^2 - 128 (n-6)(p-1) +256.
\end{eqnarray*}
One has $\nu_1,\nu_2\in\R$ and $\nu_2<0<\nu_1$.
For any $5\le n\le12$ we have $\nu_3,\nu_4\not\in\R$ and
{\rm Re}\,$\nu_3=\,${\rm Re}\,$\nu_4<0$.
For any $n\ge13$ and  $p<p_c$,  $\nu_3,\nu_4\not\in\R$ and
{\rm Re}\,$\nu_3=\,${\rm Re}\,$\nu_4<0$, while $\nu_3,\nu_4\in\R$ and $\nu_4\le\nu_3<0$
if $p\ge p_c$. In any case,
$$
\nu_2 <\operatorname{Re} \nu_{3/4}<0<\nu_1.
$$
The stable manifold of $w^{(0)}$, where the trajectory of any $w$ corresponding
to an entire regular solution is contained in, is tangential to the span of the eigenvectors
corresponding to $\nu_{2},\nu_3,\nu_4$.
In \cite{GazzolaGrunau} the following strategy to prove Theorem~\ref{theorem1}
was outlined: in the ``subcritical'' setting
$\frac{n+4}{n-4}<p<p_c$, any such trajectory oscillates around $w^{(0)}$
infinitely many times except those which are tangential to the eigenvector
corresponding to $\nu_2$. We show that the latter can not correspond to an
entire regular solution.

\begin{proposition}\label{impossible}
Let $w(\,.\,) $ be a solution of (\ref{autonomous}) in the stable manifold
of $w^{(0)}$ being tangential to the eigenvector
corresponding to $\nu_2$. Then the corresponding solution $u$ of (\ref{pde})
is singular or even not defined for all $r>0$.
\end{proposition}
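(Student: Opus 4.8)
The plan is to argue by contradiction: suppose $w(\,.\,)$ is a solution on the stable manifold tangential at $w^{(0)}$ to the eigenvector for $\nu_2$, and suppose the corresponding $u$ is a regular entire solution defined on all of $(0,\infty)$. Since $\nu_2<\operatorname{Re}\nu_{3/4}<0$, the component of the trajectory along the $\nu_2$-direction is the \emph{fastest decaying} one as $s\to+\infty$; tangency to this eigenvector means the slower $\nu_{3/4}$ components vanish, so as $s\to+\infty$ we have $w(s)-w^{(0)}\sim c\,e^{\nu_2 s}\,\xi_2$ for the eigenvector $\xi_2$ and some $c\neq0$ (if $c=0$ the trajectory is constant, which is fine but then $u=u_s$ is singular at $r=0$). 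The first step is to read off from this asymptotic expansion the sign of $v(s)-K_0^{1/(p-1)}$ for large $s$: it is eventually of one sign, determined by $\operatorname{sign}(c)$ and the sign of the first component of $\xi_2$.

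The second and central step is to promote this one-sided behavior near $+\infty$ to a global statement, namely that $v(s)-K_0^{1/(p-1)}$ never changes sign on all of $\mathbb{R}$, equivalently $u(r)<u_s(r)$ for all $r$ (or $u(r)>u_s(r)$ for all $r$). The natural tool is the comparison/monotonicity structure already exploited by the third author in \cite{Karageorgis}: the difference $\phi:=u-u_s$ (or its analog $v-K_0^{1/(p-1)}$) satisfies a linearized-type inequality, and one uses that the operator $\Delta^2$ on radial functions, together with the regularity at the origin, forces a maximum-principle-type conclusion. Concretely, I would show that if $\phi$ vanished somewhere it would have to vanish at a first point coming in from $+\infty$, and then a Wronskian or energy identity for the fourth-order ODE \eqref{ode} — integrating the equation against suitable weights on the interval from that zero to $+\infty$ — would be violated because the right-hand side $|v|^{p-1}v - K_0^{1/(p-1)p}$ has a fixed sign there. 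This shows the trajectory with $c\neq 0$ stays strictly on one side of $w^{(0)}$ for all $s$.

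The third step derives the contradiction at $r\to 0^+$, i.e. $s\to-\infty$. A regular entire solution $u$ has $u(0)$ finite, so $v(s)=e^{4s/(p-1)}u(e^s)\to 0$ as $s\to-\infty$; in particular $v(s)-K_0^{1/(p-1)}\to -K_0^{1/(p-1)}<0$. Hence the global sign of $\phi$ is forced to be negative, i.e.\ we must be in the case $u(r)<u_s(r)$ everywhere. But that is precisely the ``supercritical'' monotone picture $u<u_s$ established in \cite{Karageorgis}, and there it is shown — via the asymptotics $u(r)/u_s(r)\to1$ combined with the \emph{complex} eigenvalues $\nu_{3/4}$ in the subcritical range $p<p_c$ — that the convergence to $u_s$ cannot be one-sided: a trajectory genuinely approaching $w^{(0)}$ while staying on one side is incompatible with the spiralling forced by $\operatorname{Im}\nu_{3/4}\neq 0$ unless it is tangential to $\nu_2$, and a tangential-to-$\nu_2$ trajectory that also stays one-sided globally must actually coincide with the constant $w^{(0)}$, i.e. $c=0$, contradicting $c\neq0$. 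Thus the only tangential-to-$\nu_2$ solution is $w\equiv w^{(0)}$, whose $u$ is the singular $u_s$, not a regular entire solution.

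The main obstacle is the second step: rigorously ruling out a sign change of $v-K_0^{1/(p-1)}$ on the whole line. Fourth-order equations lack a naive maximum principle, so I expect the real work is in finding the right integral/Wronskian identity for \eqref{ode} (or equivalently for the system \eqref{autonomous} with its specific matrix $M$) that is monotone in $s$ and whose monotonicity is strict precisely because $p>\frac{n+4}{n-4}$ makes $pK_0$ and the relevant coefficients have the needed signs. Once that one-sidedness is in hand, matching it against the boundary behavior at $r=0$ and at $r=\infty$ is comparatively routine, using only the elementary limits of $v$ and the eigenvalue ordering $\nu_2<\operatorname{Re}\nu_{3/4}<0<\nu_1$ recalled above.
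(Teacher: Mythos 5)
Your outline has the right overall shape (two-sided dichotomy near $+\infty$, propagate the sign backwards, contradict regularity at $r=0$), but the two load-bearing steps are not filled in correctly, and one of your conclusions is actually false.

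\paragraph{Step 2 is not a Wronskian/energy argument.} You acknowledge that fourth-order operators lack a naive maximum principle and suggest patching this with a Wronskian or weighted energy identity on an interval ending at a first zero of $v-K_0^{1/(p-1)}$. That is not the mechanism, and it is not clear any such identity exists with the needed one-signedness. The paper's proof instead hinges on a purely algebraic observation that you never identify: the components $(t_1,t_2,t_3,t_4)$ of the $\nu_2$-eigenvector of $M$ \emph{alternate in sign}, $t_1>0$, $t_2<0$, $t_3>0$, $t_4<0$, and proving $t_4<0$ uses precisely the supercriticality inequality $(n-4)(p-1)>8$ through an exact factorization of $N_3-(n-2)^2(p-1)^4$. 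With this in hand, one works with $z_i=w_i-w_i^{(0)}$, which satisfy a triangular cascade of scalar first-order equations $z_i'(s)=a_i z_i(s)+z_{i+1}(s)$ with the last one driven by $|w_1|^{p-1}w_1-|w_1^{(0)}|^{p-1}w_1^{(0)}$. On any interval where $z_1$ keeps its sign, that source term has a definite sign, so multiplying by the obvious integrating factor $e^{-a_i s}$ and integrating \emph{backwards} from $s_0$ propagates the alternating signs of $z_4,z_3,z_2,z_1$ one by one. This is a genuine maximum principle, but for a chain of first-order ODEs, not for $\Delta^2$; there is no auxiliary integral identity to find.

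\paragraph{Step 3 is wrong in the one-sided-below case.} Having reduced to $z_1(s)<0$ for all $s\le s_0$, you argue that ``a tangential-to-$\nu_2$ trajectory that also stays one-sided globally must actually coincide with the constant $w^{(0)}$'' and hence $c=0$. That is false: the trajectory in this case is \emph{not} constant and does not converge to $w^{(0)}$ as $s\to-\infty$. What actually happens is that, iterating the cascade once more with the now-global sign information, one obtains explicit exponential lower bounds that blow up, ending with
$$
z_1(s)\le -\delta_3\,e^{-\left(n-2-\frac{4}{p-1}\right)s}\longrightarrow -\infty
\quad\text{as } s\to-\infty,
$$
so $v(s)\to-\infty$ and therefore $u(e^s)=e^{-4s/(p-1)}v(s)\to-\infty$. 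The contradiction with regularity is that a regular $u$ forces $v(s)\to0$ as $s\to-\infty$; it is \emph{not} that the trajectory is forced to be the fixed point. Your appeal to the spiraling of the $\nu_{3/4}$ directions is also circular here, since tangency to the $\nu_2$-eigenvector already puts you in the invariant one-dimensional (strong-stable) submanifold where no spiraling occurs. Finally, your reference to the ``supercritical monotone picture $u<u_s$'' from \cite{Karageorgis} conflates two regimes: that monotone picture holds for $p\ge p_c$, whereas the proposition is being applied precisely in $p<p_c$, and in any case it concerns the genuine entire solution, not the tangential trajectory you are analyzing.

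In short, you correctly sense that a sign-propagation argument is the crux, but you are missing the eigenvector sign lemma that makes the propagation work, and the final contradiction in the below-the-singular-solution case must come from quantitative blow-up of $v$ at $s\to-\infty$, not from forcing the trajectory to be constant.
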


In order to prove this proposition we need the following crucial observation
on the sign of the components of an eigenvector corresponding to $\nu_2$:

\begin{lemma}
One eigenvector of $M$ corresponding to $\nu_2$ is given by $t=(t_1,t_2,t_3,t_4)$
with
\begin{eqnarray*}
t_1&=&1>0,\\
t_2&=&\left( \nu_2-\frac{4}{p-1}\right)<0,\\
t_3&=&\left( \nu_2-2-\frac{4}{p-1}\right)\left( \nu_2-\frac{4}{p-1}\right)>0,\\
t_4&=&\left( \nu_2+n-2-\frac{4}{p-1}\right)
\left( \nu_2-2-\frac{4}{p-1}\right)\left( \nu_2-\frac{4}{p-1}\right)<0.
\end{eqnarray*}
\end{lemma}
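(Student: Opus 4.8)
The plan is to compute the eigenvector explicitly by solving the eigenvalue equation row by row, and then to deduce all four sign assertions from a single inequality that places $\nu_2$ far enough to the left.

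First I would write $Mt=\nu_2 t$ and normalise $t_1=1$. The first three scalar equations give, in succession, $t_2=\big(\nu_2-\frac{4}{p-1}\big)t_1$, then $t_3=\big(\nu_2-\frac{4}{p-1}-2\big)t_2$, then $t_4=\big(\nu_2-\frac{4}{p-1}+n-2\big)t_3$, which are precisely the formulas in the statement; the fourth equation reads $pK_0\,t_1=\big(\nu_2-\frac{4}{p-1}+n-4\big)t_4$ and, after substitution, becomes exactly $P(\nu_2)=0$, so it holds automatically. Thus the displayed expressions for $t_1,\dots,t_4$ cost nothing and the whole content of the lemma is the sign analysis. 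Here $t_1=1>0$ is trivial; since $\frac{4}{p-1}>0>\nu_2$ we get $t_2=\nu_2-\frac{4}{p-1}<0$, and then both factors $\nu_2-\frac{4}{p-1}$ and $\nu_2-\frac{4}{p-1}-2$ of $t_3$ are negative, so $t_3>0$. Everything therefore reduces to the sign of $t_4=\big(\nu_2+n-2-\frac{4}{p-1}\big)t_3$, i.e.\ (since $t_3>0$) to the inequality
$$
\nu_2<\frac{4}{p-1}-(n-2),
$$
and I expect this to be the main obstacle.

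To prove this inequality I would avoid the explicit radical formula for $\nu_2$ and instead use the factored form of $P$ recalled above. Put $\nu_\star:=\frac{4}{p-1}-(n-2)$. In the product equal to $P(\nu)+pK_0$ the factor $\big(\nu-\frac{4}{p-1}+n-2\big)$ vanishes at $\nu=\nu_\star$, so $P(\nu_\star)=-pK_0<0$; here one uses $p>1$ together with $K_0>0$, the latter holding because $p>(n+4)/(n-4)$ forces $\frac{4}{p-1}<\frac{n-4}{2}$, which makes all four factors of $K_0$ positive. On the other hand $P$ is a monic quartic, so $P(\nu)\to+\infty$ as $\nu\to-\infty$; by the intermediate value theorem $P$ has a real zero strictly below $\nu_\star$. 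Since $\nu_2$ is the smallest real root of $P$ --- by the ordering $\nu_2<\operatorname{Re}\nu_{3/4}<0<\nu_1$ recalled above, either $\nu_1,\nu_2$ are the only real roots, or $\nu_3,\nu_4\in\R$ and then $\nu_2<\nu_4\le\nu_3<0<\nu_1$ --- it follows that $\nu_2<\nu_\star$. Hence $\nu_2+n-2-\frac{4}{p-1}<0$, so $t_4<0$, and the lemma follows.
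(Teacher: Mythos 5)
Your proof is correct but takes a genuinely different route from the paper's. Both proofs agree that the signs of $t_1,t_2,t_3$ follow trivially from $\nu_2<0$, and that the only real content is the inequality $\nu_2+n-2-\frac{4}{p-1}<0$. The paper attacks this head-on via the explicit radical formula $\nu_2=\frac{N_1-\sqrt{N_2+4\sqrt{N_3}}}{2(p-1)}$, rewriting the target as $N_3>(n-2)^2(p-1)^4$ and verifying it through the rather miraculous factorisation
$$
N_3-(n-2)^2(p-1)^4=8p(p+1)\bigl((n-2)(p-1)-4\bigr)\bigl((n-4)(p-1)-4\bigr),
$$
which is positive under the supercriticality hypothesis $(n-4)(p-1)>8$. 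You instead bypass the radicals entirely: you evaluate the characteristic polynomial at the threshold $\nu_\star=\frac{4}{p-1}-(n-2)$, where one linear factor of $P+pK_0$ vanishes, giving $P(\nu_\star)=-pK_0<0$; combined with $P(\nu)\to+\infty$ as $\nu\to-\infty$ and the fact (recorded in the paper) that $\nu_2$ is the leftmost real root, the intermediate value theorem forces $\nu_2<\nu_\star$. Your version buys conceptual transparency — one sees directly why $\nu_2$ must lie to the left of $\nu_\star$, with no algebraic identity to discover or verify — at the cost of leaning on the previously established ordering $\nu_2<\operatorname{Re}\nu_{3/4}<0<\nu_1$ (and its real-root refinement $\nu_2<\nu_4\le\nu_3$ when $p\ge p_c$). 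The paper's computation is self-contained in that respect but opaque. Both are valid; yours is arguably the more illuminating argument.
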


\begin{proof}
Since $\nu_2<0$ we only have to show that
\begin{equation}\label{negative}
0>\nu_2+n-2-\frac{4}{p-1}=\frac{n}{2}-\frac{1}{2(p-1)}
\sqrt{N_2+4\sqrt{N_3}}
\end{equation}
the latter being equivalent to proving that
$$
N_3>(n-2)^2(p-1)^4.
$$
Indeed, by using the supercriticality assumption $(n-4)(p-1)>8$, we have
\begin{eqnarray*}
N_3-(n-2)^2(p-1)^4 &=& 8(n-2)(n-4)(p-1)^4+8(3n^2-26n+48)(p-1)^3\\
&&+16(n^2-18n+52)(p-1)^2-128(n-6)(p-1)+256\\
&=& 8p(p+1)((n-2)(p-1)-4)((n-4)(p-1)-4)>0.
\end{eqnarray*}
This proves (\ref{negative}) and hence the lemma.
\end{proof}

\vspace{4mm}\noindent
{\it Proof of Proposition~\ref{impossible}.}
Let $w(\, .\,)$ be a solution to (\ref{autonomous}) being tangential  for $s\to\infty$
to the eigenvector $t$ from the previous lemma. We may assume that $w(\, .\,)$
exists on the whole real line $\mathbb{R}$ because otherwise, nothing is to be proved.
We put $z_1(s)=w_1(s)-w_1^{(0)}$ and further
\begin{eqnarray*}
z_1(s) &=& w_1(s) -w_1^{(0)}=v(s)-K_0^{1/(p-1)},\\
z_2(s) &=&w_2(s) -w_2^{(0)}=\left( \partial_s -\frac{4}{p-1}\right)z_1(s),\\
z_3(s)&=&w_3(s) -w_3^{(0)}=\left( \partial_s -\frac{4}{p-1}-2\right)z_2(s),\\
z_4(s)&=&w_4(s) -w_4^{(0)}=\left( \partial_s -\frac{4}{p-1}+n-2\right)z_3(s),\\
\end{eqnarray*}
so that
$$
\left( \partial_s -\frac{4}{p-1}+n-4\right)z_4(s)=|v(s)|^{p-1}v(s)-K_0^{p/(p-1)}
=|w_1(s)|^{p-1}w_1(s)-|w_1^{(0)}|^{p-1}w_1^{(0)}.
$$
Writing this more systematically yields
\begin{equation}\left\{\begin{array}{rcl}
z_1'(s) &=&\frac{4}{p-1}z_1(s)+z_2(s),\\
z_2'(s) &=&\left( \frac{4}{p-1}+2\right)z_2(s)+z_3(s),\\
z_3'(s) &=&\left( \frac{4}{p-1}-(n-2)\right)z_3(s)+z_4(s),\\
z_4'(s)&=& |w_1(s)|^{p-1}w_1(s)-|w_1^{(0)}|^{p-1}w_1^{(0)}
+\left( \frac{4}{p-1}-(n-4)\right)z_4(s).
\end{array}\right.
\end{equation}
According to whether $z(\, .\,)$ approaches the origin from ``above'' or
``below'' we distinguish two cases.

\vspace{2mm}\noindent
{\it First case.} There exists $s_0$ large enough such that
\begin{equation}\label{positive}
z_1(s_0) >0, \quad z_2(s_0)<0,\quad z_3(s_0) >0, \quad z_4(s_0)<0.
\end{equation}
On any interval $[s,s_0]$ where $z_1(\, .\,)=w_1(\, .\,)-w_1^{(0)}\ge 0$, we
must then have
\begin{equation*}
\left( \partial_s+(n-4)-\frac{4}{p-1}\right)z_4(s)=
|w_1(s)|^{p-1}w_1(s)-|w_1^{(0)}|^{p-1}w_1^{(0)}\ge 0.
\end{equation*}
This makes $e^{\left( (n-4)-\frac{4}{p-1}\right)s} z_4(s)$ increasing on $[s,s_0]$,
and so \eqref{positive} implies that
\begin{equation*}
e^{\left( (n-4)-\frac{4}{p-1}\right)s} z_4(s) \leq
e^{\left( (n-4)-\frac{4}{p-1}\right)s_0} z_4(s_0) < 0
\end{equation*}
on $[s,s_0]$.  In particular, $z_4(s) <0$ throughout the interval, and we have
\begin{equation*}
\left( \partial_s+(n-2)-\frac{4}{p-1}\right)z_3(s) = z_4(s) < 0.
\end{equation*}
This makes $e^{\left( (n-2)-\frac{4}{p-1}\right)s} z_3(s)$ decreasing on $[s,s_0]$,
so we similarly find that
\begin{equation*}
e^{\left( (n-2)-\frac{4}{p-1}\right)s} z_3(s) \geq
e^{\left( (n-2)-\frac{4}{p-1}\right)s_0} z_3(s_0) > 0
\end{equation*}
by \eqref{positive}.  Since $\left( \partial_s-2-\frac{4}{p-1}\right)z_2(s) = z_3(s) >0$,
the exact same argument leads us to
\begin{equation*}
e^{\left( -2-\frac{4}{p-1}\right)s} z_2(s) \leq
e^{\left( -2-\frac{4}{p-1}\right)s_0} z_2(s_0) < 0
\end{equation*}
by \eqref{positive}, hence $\left( \partial_s-\frac{4}{p-1}\right)z_1(s) = z_2(s) <0$
and we finally get
\begin{equation*}
e^{-\frac{4}{p-1} s} z_1(s) \geq
e^{-\frac{4}{p-1} s_0} z_1(s_0) >0.
\end{equation*}
That is, $z_1(s)>0$ on any interval $[s,s_0]$ where $z_1(s)\geq 0$, so it is impossible
for $z_1(s)$ to become $0$ at some $s<s_0$. Hence $\forall s\leq s_0:\quad z_1(s)>0$. For
the original solution this means that for $r\le r_0$, $u(\, .\, )$ lies above the singular
solution. This means that $u(\,.\,)$ itself is singular at $r=0$.

\vspace{2mm}\noindent
{\it Second case.} There exists $s_0$ large enough such that
\begin{equation}\label{Negative}
z_1(s_0) <0, \quad z_2(s_0)>0,\quad z_3(s_0) <0, \quad z_4(s_0)>0.
\end{equation}
On any interval $[s,s_0]$ where $z_1(\, .\,)=w_1(\, .\,)-w_1^{(0)}\le 0$, we
must then have
\begin{equation*}
\left( \partial_s+(n-4)-\frac{4}{p-1}\right)z_4(s)=
|w_1(s)|^{p-1}w_1(s)-|w_1^{(0)}|^{p-1}w_1^{(0)}\le 0.
\end{equation*}
This makes $e^{\left( (n-4)-\frac{4}{p-1}\right)s} z_4(s)$ decreasing on $[s,s_0]$,
and so \eqref{Negative} implies that
\begin{equation*}
e^{\left( (n-4)-\frac{4}{p-1}\right)s} z_4(s) \geq
e^{\left( (n-4)-\frac{4}{p-1}\right)s_0} z_4(s_0) > 0
\end{equation*}
on $[s,s_0]$. In particular, $z_4(s)>0$ throughout the interval, and we have
\begin{equation*}
\left( \partial_s+(n-2)-\frac{4}{p-1}\right)z_3(s) = z_4(s) > 0.
\end{equation*}
This makes $e^{\left( (n-2)-\frac{4}{p-1}\right)s} z_3(s)$ increasing on $[s,s_0]$,
so we similarly find that
\begin{equation}\label{blowup}
e^{\left( (n-2)-\frac{4}{p-1}\right)s} z_3(s) \leq
e^{\left( (n-2)-\frac{4}{p-1}\right)s_0} z_3(s_0) < 0
\end{equation}
by \eqref{Negative}.  Following this approach, as in the first case, we eventually get
\begin{equation}\label{signs}
z_4(s) > 0, \quad z_3(s)<0, \quad z_2(s)>0, \quad z_1(s)<0
\end{equation}
on any interval $[s,s_0]$ where $z_1(s)\leq 0$, so it is impossible for $z_1(s)$ to
become $0$ at some $s<s_0$. Hence $\forall s\leq s_0:\quad z_1(s)<0$, i.e. the corresponding
$u(\,.\,)$ is always below the singular solution. In order to prove that $u(\,.\,)$ itself
is singular also in this case, we show that $z_1(s)\to-\infty$ for $s\to-\infty$.
Since $\forall s\le s_0:\quad z_1(s)<0$, we have that (\ref{blowup}) holds
true for all $s\le s_0$. Referring to \cite[Proposition 1]{FerreroGrunau} would already show that
also $v$ and so $u$ cannot be bounded. However, here it is quite easy
to show this directly. For some suitable constant $\delta_1>0$ one has:
\begin{equation*}
\partial_s \left(  e^{-\left( 2+\frac{4}{p-1}\right)s}z_2(s)\right) =
e^{-\left( 2+\frac{4}{p-1}\right)s} z_3(s) \leq -\delta_1 e^{-ns}
\end{equation*}
because of \eqref{blowup}, and this implies that
\begin{align*}
e^{-\left( 2+\frac{4}{p-1}\right)s}z_2(s)
&\geq \frac{\delta_1}{n} e^{-ns} - \frac{\delta_1}{n} e^{-ns_0}
+ e^{-\left( 2+\frac{4}{p-1}\right)s_0}z_2(s_0) \\
&\geq \delta_2 e^{-ns}
\end{align*}
for some suitable constant $\delta_2>0$.  In particular,
\begin{equation*}
\partial_s \left(  e^{-\frac{4}{p-1}s} z_1(s) \right) =
e^{-\frac{4}{p-1}s} z_2(s) \geq \delta_2 e^{-(n-2)s}
\end{equation*}
and this implies that
\begin{align*}
e^{-\frac{4}{p-1}s} z_1(s)
&\leq \frac{\delta_2}{n-2} \left( e^{-(n-2)s_0}-e^{-(n-2)s} \right)
+ e^{-\frac{4}{p-1}s_0}z_1(s_0) \\
&\leq -\delta_3 e^{-(n-2)s}
\end{align*}
for some suitable constant $\delta_3>0$.  Thus, we end up with
\begin{equation}
z_1(s) \le -\delta_3 e^{-\left( n-2-\frac{4}{p-1}\right)s} \to-\infty \mbox{\ as\ }
s\to-\infty,
\end{equation}
so that also in this case, the corresponding solution $u$ of (\ref{pde}) becomes
singular at $r=0$.
\hfill $\square$

\vspace{4mm}
\noindent
Completing the proof of Proposition~\ref{impossible} also yields the proof of
Theorem~\ref{theorem1}.

\section{The Dirichlet problem}

If we put $r=|x|$ then the equation in (\ref{Dirichlet}) becomes

\begin{equation} \label{radialeq}
u^{(4)}(r)+\frac{2(n-1)}{r}u'''(r)+\frac{(n-1)(n-3)}{r^2}u''(r)-\frac{(n-1)(n-3)}{r^3}u'(r)=\lambda(1+u)^p, \ \ \ r\in[0,1].
\end{equation}
If we put
\neweq{defU}
U(x)=1+u(x/\sqrt[4]\lambda) \qquad \qquad \mbox{for\ } x\in B_{\sqrt[4]\lambda}(0)
\endeq
then $U$ solves the equation
\begin{equation} \label{eqU}
\Delta^2 U=U^p \qquad \qquad \mbox{in} \ B_{\sqrt[4]\lambda}(0).
\end{equation}
Since the equation (\ref{eqU}) is invariant under the rescaling
$$ U_a(x)=aU(a^{\frac{p-1}{4}}x) $$
i.e. $U$ is a solution of (\ref{eqU}) if and only if $U_a$ is a solution of (\ref{eqU}),
it is not restrictive to concentrate our attention on solutions $U$ of
the equation (\ref{eqU}) which satisfy the condition $U(0)=1$.

Next we define $U_\gamma=U_\gamma(r)$ as the unique solution of the initial value problem

\neweq{initial}
\begin{tabular}{l}
$ U_\gamma^{(4)}(r)+\displaystyle{\frac{2(n-1)}{r}}U_\gamma'''(r)
+\displaystyle{\frac{(n-1)(n-3)}{r^2}}U_\gamma''(r)
\displaystyle{-\frac{(n-1)(n-3)}{r^3}}U_\gamma'(r)=|U_\gamma(r)|^{p-1}U_\gamma(r),$  \\
\\
$U_\gamma(0)=1, \qquad \qquad U'_\gamma(0)=U'''_\gamma(0)=0, \qquad \qquad U''_\gamma(0)=\gamma<0.$
\end{tabular}
\endeq

We report here the following fundamental result by \cite{GazzolaGrunau}:

\begin{lemma}[\cite{GazzolaGrunau}] \label{gazgru}
Let $n>4$ and $p>(n+4)/(n-4)$.
\begin{itemize}
\item[(i)] There exists a unique $\overline\gamma<0$ such that the solution
$U_{\overline\gamma}$ of $(\ref{initial})$ exists on the whole interval $[0,\infty)$,
it is positive everywhere, it vanishes at infinity and it satisfies
$U'_{\overline\gamma}(r)<0$ for any $r\in(0,\infty)$.
\item[(ii)] If $\gamma<\overline \gamma$ there exist $0<R_1<R_2<\infty$ such that the solution
$U_\gamma$ of $(\ref{initial})$ satisfies $U_\gamma(R_1)=0$,
$\lim_{r\uparrow R_2} U_\gamma(r)=-\infty$ and $U'_\gamma(r)<0$ for any $r\in(0,R_2)$.
\item[(iii)] If $\gamma>\overline \gamma$ there exist $0<R_1<R_2<\infty$ such that the solution
$U_\gamma$ of $(\ref{initial})$  satisfies $U'_\gamma(r)<0$ for
$r\in(0,R_1)$, $U'_\gamma(R_1)=0$, $U'_\gamma(r)>0$ for $r\in (R_1,R_2)$ and
$\lim_{r\uparrow R_2}U_\gamma(r)=+\infty$.
\item[(iv)] If $\gamma_1<\gamma_2<0$ then the corresponding solutions
$U_{\gamma_1},U_{\gamma_2}$ of $(\ref{initial})$
satisfy $U_{\gamma_1}<U_{\gamma_2}$ and $U'_{\gamma_1}<U'_{\gamma_2}$ as long as they both exist.
\end{itemize}
\end{lemma}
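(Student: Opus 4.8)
The plan is a shooting argument in the single parameter $\gamma=U_\gamma''(0)$, built around a monotone-separation principle. First I would record local well-posedness of the singular initial value problem \eqref{initial}: for every $\gamma<0$ there is a unique $C^4$ solution on a maximal interval $[0,R(\gamma))$, most conveniently obtained by passing to $s=\log r$ and the autonomous system \eqref{autonomous}, or directly by a contraction argument in a function space adapted to the singular coefficients at the origin; and, on compact subintervals of the common existence interval, the quadruple $(U_\gamma,U_\gamma',U_\gamma'',U_\gamma''')$ depends continuously on $\gamma$. This continuous dependence is what will make the various blow-up and zero-crossing sets open.

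Next I would prove (iv), which drives everything else. Fix $\gamma_1<\gamma_2<0$ and put $w=U_{\gamma_2}-U_{\gamma_1}$, so that $w(0)=w'(0)=w'''(0)=0$, $w''(0)=\gamma_2-\gamma_1>0$, and $\Delta^2w=c(r)\,w$ with
$$
c(r)=\frac{|U_{\gamma_2}|^{p-1}U_{\gamma_2}-|U_{\gamma_1}|^{p-1}U_{\gamma_1}}{U_{\gamma_2}-U_{\gamma_1}}\ge 0 ,
$$
since $t\mapsto|t|^{p-1}t$ is increasing. Setting $\omega=\Delta w$ (so $\Delta\omega=c\,w$, $\omega(0)=n(\gamma_2-\gamma_1)>0$, $\omega'(0)=0$), I would run the bootstrap on any interval $[0,R]$ on which $w\ge 0$: from $(r^{n-1}\omega')'=r^{n-1}c\,w\ge0$ and $r^{n-1}\omega'\to0$ at $0$ one gets $\omega\ge\omega(0)>0$, whence $(r^{n-1}w')'=r^{n-1}\omega>0$ gives $w'>0$ on $(0,R]$ and hence $w>w(0)=0$ there. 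Thus $w$ can never return to $0$, i.e. $U_{\gamma_1}<U_{\gamma_2}$ and $U_{\gamma_1}'<U_{\gamma_2}'$ on the whole common existence interval; in particular the difference of any two solutions of the family is strictly increasing in $r$.

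Then I would classify the global behaviour. Call $\gamma$ of \emph{type $A$} if $U_\gamma$ has an interior critical point and then $U_\gamma(r)\to+\infty$ at a finite radius, and of \emph{type $B$} if $U_\gamma$ runs monotonically to $-\infty$ at a finite radius after a single zero. These two sets are disjoint; they are open by continuous dependence; and by (iv) they are one-sided ($A$ upward closed, $B$ downward closed). Both are nonempty: as $\gamma\uparrow0$ the solution approaches the convex, increasing, finite-time blowing-up profile with $U''(0)=0$, so $\gamma$ near $0$ is of type $A$; and for $\gamma$ very negative $U_\gamma$ is so concave near the origin that it reaches $0$ quickly, after which $\Delta^2U_\gamma<0$ forbids any further sign change and forces divergence to $-\infty$. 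Granting that every admissible $\gamma$ is of type $A$, of type $B$, or yields a global positive solution, we then have $A=(\gamma_A,0)$ and $B=(-\infty,\gamma_B)$ with $\gamma_B\le\gamma_A$, the complementary closed interval $[\gamma_B,\gamma_A]$ consisting of global positive solutions; but by (iv) the difference of any two such solutions is strictly increasing in $r$ while both tend to $0$, which forces $\gamma_B=\gamma_A=:\overline\gamma$. A monotone Pohozaev/energy identity for \eqref{initial} — the same quantity behind the known asymptotics $u/u_s\to1$ — then shows that this unique global positive solution has no interior critical point, hence $U_{\overline\gamma}'<0$ and $U_{\overline\gamma}(r)\to0$: this is (i). Parts (ii) and (iii) are precisely the statements ``$\gamma$ of type $B$'' and ``$\gamma$ of type $A$'', read off for $\gamma<\overline\gamma$, respectively $\overline\gamma<\gamma<0$, together with the sign of $U_\gamma'$ up to the first zero, respectively the first critical point.

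The main obstacle is the asymptotic rigidity invoked at the last step: proving that once $U_\gamma$ turns negative it really escapes to $-\infty$ rather than oscillating back, that a global positive solution is forced to be monotone decreasing (and therefore to decay), and that in the remaining case blow-up is to $+\infty$ after exactly one sign change of $U_\gamma'$. All of this rests on having a sufficiently sharp monotone functional along trajectories of \eqref{initial} — equivalently a Lyapunov/Pohozaev quantity for the autonomous system \eqref{autonomous} — and on extracting the precise dichotomy from it; the shooting, openness, and monotonicity scaffolding around it is routine.
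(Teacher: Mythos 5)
The paper itself does not prove this lemma: it is imported verbatim from \cite{GazzolaGrunau}, and the in-text ``proof'' is the one-line pointer to \cite[Theorem~2 and Lemma~2]{GazzolaGrunau}. So there is no in-paper argument to compare against; you are in effect being asked to reconstruct that source. Your proof of (iv) does that correctly and self-containedly: with $w=U_{\gamma_2}-U_{\gamma_1}$ one has $\Delta^2 w=c(r)w$, $c\ge 0$, with $w(0)=w'(0)=w'''(0)=0$, $w''(0)>0$, and the two nested first integrals $(r^{n-1}\omega')'=r^{n-1}cw$ and $(r^{n-1}w')'=r^{n-1}\omega$, together with $\omega(0)=n(\gamma_2-\gamma_1)>0$ and $r^{n-1}\omega'\to 0$, $r^{n-1}w'\to 0$ at the origin, give $\omega\ge\omega(0)>0$ and hence $w'>0$, $w>0$ on any interval where $w\ge 0$; the standard open-and-closed bootstrap then propagates the strict inequalities to the whole common existence interval. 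This is essentially \cite[Lemma~2]{GazzolaGrunau}.

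For (i)--(iii), however, what you have written is a plan, not a proof, and you say so yourself. The genuine gaps, none of which is routine, are: (a) that blow-up is a true limit $U_\gamma\to+\infty$ once $U_\gamma'$ turns positive, which requires a monotonicity/energy argument on $\Delta U_\gamma$, not just the a priori continuation criterion; (b) that once $U_\gamma$ crosses zero it remains strictly monotone and diverges to $-\infty$ --- the remark that ``$\Delta^2 U_\gamma<0$ forbids any further sign change'' is not by itself an argument, since a fourth-order operator admits no sign-propagation of that simplicity without tracking $\Delta U_\gamma$ and $(\Delta U_\gamma)'$ as well; (c) openness of the type-$A$ and type-$B$ sets, which needs the stability of these terminal behaviours under small changes of $\gamma$, a strictly stronger fact than continuous dependence on compact intervals; and (d) the trichotomy ``type $A$, type $B$, or global positive,'' which you simply grant. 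These four items \emph{are} \cite[Theorem~2]{GazzolaGrunau}; without them the conclusion that the exceptional set is a single $\overline\gamma$ with the listed properties does not close. Your uniqueness step (two distinct global positive decaying solutions would, by (iv), have a strictly increasing positive difference, contradicting common decay to zero) is fine, but note it presupposes that a global positive solution decays to zero, which is again part of (a)/(d) and must be established first.
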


\begin{proof}
See the statement of \cite[Theorem 2]{GazzolaGrunau} and related proof and also the statement
of \cite[Lemma 2]{GazzolaGrunau}.
\end{proof}

For any $\gamma<0$ let $U_\gamma$ be the unique local solution of
\eq{initial}. Thanks to Lemma \ref{gazgru} (iii), for $\gamma>\overline \gamma$ we may define
$R_\gamma$ as the unique value of $r>0$ for which we have $U'_\gamma(R_\gamma)=0$.

\begin{lemma} \label{gamgam}
Let $n>4$, $p>(n+4)/(n-4)$ and $\gamma\in (\overline \gamma,0)$ with $\overline \gamma$
as in the statement
of  Lemma~\ref{gazgru}. Then the map $\gamma\mapsto R_\gamma$ is monotonically decreasing and
$$ \lim_{\gamma\downarrow \overline \gamma} R_\gamma=+\infty. $$
\end{lemma}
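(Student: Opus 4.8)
The plan is to derive both assertions from the monotone dependence of the family $\{U_\gamma\}$ on $\gamma$ recorded in Lemma~\ref{gazgru}(iv), combined with the qualitative information in Lemma~\ref{gazgru}(i) and (iii). For the monotonicity of $\gamma\mapsto R_\gamma$, I would fix $\overline\gamma<\gamma_1<\gamma_2<0$ and show $R_{\gamma_1}>R_{\gamma_2}$ by contradiction. By Lemma~\ref{gazgru}(iv) one has $U'_{\gamma_1}(r)<U'_{\gamma_2}(r)$ at every $r>0$ where both solutions are defined (the inequality is strict for $r>0$ because $U'_{\gamma_1}(0)=U'_{\gamma_2}(0)=0$). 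Suppose $R_{\gamma_1}\le R_{\gamma_2}$: since $U_{\gamma_2}$ exists past $R_{\gamma_2}$ by Lemma~\ref{gazgru}(iii), it is defined at $R_{\gamma_1}$, and $U'_{\gamma_2}<0$ on $(0,R_{\gamma_2})$. If $R_{\gamma_1}<R_{\gamma_2}$ this yields the absurdity $0=U'_{\gamma_1}(R_{\gamma_1})<U'_{\gamma_2}(R_{\gamma_1})<0$; if $R_{\gamma_1}=R_{\gamma_2}=:R>0$ then $U'_{\gamma_1}(R)=U'_{\gamma_2}(R)=0$, contradicting strictness at $R$. Hence $R_{\gamma_1}>R_{\gamma_2}$, i.e. $\gamma\mapsto R_\gamma$ is strictly decreasing on $(\overline\gamma,0)$.

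Since $\gamma\mapsto R_\gamma$ is decreasing, $L:=\lim_{\gamma\downarrow\overline\gamma}R_\gamma=\sup_{\gamma\in(\overline\gamma,0)}R_\gamma$ exists in $(0,+\infty]$, with $L>0$ because each $R_\gamma>0$. To prove $L=+\infty$ I would argue by contradiction, assuming $L<\infty$. By Lemma~\ref{gazgru}(i) the solution $U_{\overline\gamma}$ is defined and bounded on the compact interval $[0,L+1]$, so continuous dependence on the parameter $\gamma$ for \eqref{initial} guarantees that, for $\gamma$ sufficiently close to $\overline\gamma$, $U_\gamma$ is also defined on all of $[0,L+1]$ and $U_\gamma\to U_{\overline\gamma}$ in $C^1([0,L+1])$ as $\gamma\downarrow\overline\gamma$. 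Taking a sequence $\gamma_k\downarrow\overline\gamma$, one has $R_{\gamma_k}\uparrow L\in(0,L+1)$ and $U'_{\gamma_k}\to U'_{\overline\gamma}$ uniformly on $[0,L+1]$, hence $U'_{\overline\gamma}(L)=\lim_k U'_{\gamma_k}(R_{\gamma_k})=0$. This contradicts $U'_{\overline\gamma}(r)<0$ for all $r\in(0,\infty)$ from Lemma~\ref{gazgru}(i), so $L=+\infty$.

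I expect the two comparison arguments to be routine; the point needing some care is the continuous-dependence step, precisely the fact that the maximal existence radius of $U_\gamma$ does not drop below $L+1$ as $\gamma\downarrow\overline\gamma$ --- equivalently, the lower semicontinuity in $\gamma$ of the blow-up radius $R_2=R_2(\gamma)$ of Lemma~\ref{gazgru}(iii). On any interval $[\eps,L+1]$ with $\eps>0$ the equation is a regular fourth-order ODE and this is classical, while near $r=0$ it is a consequence of the local well-posedness of \eqref{initial} established in \cite{GazzolaGrunau}. Should one prefer to avoid invoking continuous dependence, an alternative is to use that $U_\gamma\downarrow U_{\overline\gamma}$ pointwise as $\gamma\downarrow\overline\gamma$ (Lemma~\ref{gazgru}(iv)): bounding $U_\gamma$ between $U_{\overline\gamma}$ and a fixed $U_{\gamma_0}$ on compacta controls the right-hand side of \eqref{initial}, and an Arzel\`a--Ascoli argument together with uniqueness of the limit again forces $U'_{\overline\gamma}(L)=0$.
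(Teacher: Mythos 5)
Your proof is correct, and for the main assertion $\lim_{\gamma\downarrow\overline\gamma}R_\gamma=+\infty$ you take a genuinely different route from the paper. The monotonicity part is exactly the paper's argument (the paper only says it ``follows immediately by Lemma~\ref{gazgru}~(iv)''), spelled out in more detail. For the limit, the paper also argues by contradiction, but works entirely on $[0,R_\gamma]$: it rescales to $u_\gamma(r)=U_\gamma(R_\gamma r)/U_\gamma(R_\gamma)-1$, which solves the fixed Dirichlet problem \eqref{Dirichlet2} on the unit ball with a uniform $L^\infty$ bound, then uses an elliptic bootstrap to get $u_{\gamma_k}\to\overline u$ in $C^4(\overline B)$; undoing the rescaling produces a limit $\overline U$ on $[0,\overline R)$ with $\overline U'(\overline R)=0$ inherited from the clamped boundary condition, and continuous dependence is invoked only to identify $\overline U$ with $U_{\overline\gamma}$ on the \emph{open} interval $[0,\overline R)$. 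Your argument is more direct and avoids the PDE detour, at the price of a slightly stronger continuous-dependence input: you need $U_\gamma$ to remain defined on all of $[0,L+1]$, i.e.\ past $R_\gamma$, and the $C^1$ convergence up to $r=L$. You correctly flag this as the delicate step; it is indeed a standard consequence of regular ODE continuous dependence on $[\eps,L+1]$ combined with the local well-posedness at $r=0$ from \cite{GazzolaGrunau}, and the paper itself invokes continuous dependence at the same spot, so the shortcut is legitimate. One small caveat: your fallback Arzel\`a--Ascoli argument via the squeeze $U_{\overline\gamma}\le U_\gamma\le U_{\gamma_0}$ presupposes that some $U_{\gamma_0}$ with $\gamma_0\in(\overline\gamma,0)$ is already known to exist on $[0,L+1]$, i.e.\ that its blow-up radius $R_2(\gamma_0)$ exceeds $L+1$, which is essentially the continuous-dependence fact you were trying to sidestep, so the direct route is the one to keep.
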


\begin{proof}
The fact that the map $\gamma\mapsto R_\gamma$ is monotonically decreasing follows
immediately by Lemma \ref{gazgru} (iv). This shows that the function $\gamma\mapsto R_\gamma$ admits
a limit as $\gamma\rightarrow \overline \gamma$.
Suppose by contradiction that
$$ \overline R:=\lim_{\gamma\downarrow \overline \gamma} R_\gamma<+\infty.$$
Then, by Lemma \ref{gazgru} (i), (iv) we have for all $ \gamma\in(\overline\gamma,0)$ that
\neweq{bounbelow}
U_\gamma(R_\gamma)>U_{\overline\gamma}(R_\gamma)
\geq U_{\overline\gamma}(\overline R)>0.
\endeq
Define for any $\gamma\in(\overline\gamma,0)$, $r\in[0,1]$ the function
\neweq{defugamma}
u_\gamma(r)=\frac{U_\gamma(R_\gamma r)}{U_\gamma(R_\gamma)}-1 .
\endeq
Then, $u_\gamma$ solves the Dirichlet problem
\begin{equation}\label{Dirichlet2}
\left\{
\begin{array}{ll}
\Delta^2 u_\gamma=R_\gamma^4 U_\gamma(R_\gamma)^{p-1} (1+u_\gamma)^ p \ & \mbox{in }B,\\
u_\gamma=|\nabla u_\gamma|=0 & \mbox{on }\partial B.
\end{array}
\right.
\end{equation}
Moreover, by \eq{bounbelow} and the fact that
$U_\gamma(R_\gamma)\leq U_\gamma(r)\leq U_\gamma(0)=1$
for any $r\in[0,R_\gamma]$, we have for all $\gamma\in(\overline \gamma,0)$, $x\in B$
\neweq{unifbound}
0\leq u_\gamma(x)\leq U_{\overline \gamma}(\overline R)^{-1}-1 .
\endeq
This shows that the set $\{u_\gamma:\gamma\in(\overline\gamma,0)\}$ is bounded
in $L^\infty(B)$
and hence by a bootstrap argument, from (\ref{Dirichlet2})
and the fact that $R^4_\gamma U_\gamma(R_\gamma)^{p-1}\leq \lambda^*$,
we deduce that there exists a sequence
$\gamma_k\downarrow \overline \gamma$ and a function $\overline u\in H^2_0(B)\cap C^\infty(\overline B)$
such that
\neweq{fortconv}
u_{\gamma_k}\rightarrow \overline u \qquad \qquad \mbox{in} \ C^4(\overline B)
\endeq
as $k\rightarrow \infty$.
Since the sequence $U_{\gamma_k}(R_{\gamma_k})$ is monotonically decreasing and 
bounded from below then
for any $r\in [0,\overline R)$ we have that for sufficiently large $k$,
$U_{\gamma_k}(r)=U_{\gamma_k}(R_{\gamma_k})\left[u_{\gamma_k}
\left(r/R_{\gamma_k}\right)+1\right]$
is well defined and admits a finite limit as $k\rightarrow \infty$ which will be denoted by
$\overline U(r)$.
In fact $U_{\gamma_k}\rightarrow \overline U$ in $C^4([0,R])$ for any $0<R<\overline R$
and moreover by \eq{fortconv} we also have that
$$ \overline U(x)=\left[\lim_{k\rightarrow\infty} U_{\gamma_k}(R_{\gamma_k})\right]
\cdot \left[\overline u\left(\frac{r}{\overline R}\right)+1\right].
$$
Since $\overline u \in H^2_0(B)$  we also have
\neweq{condlim}
\lim_{r\uparrow \overline R} \overline U'(r)=0.
\endeq
On the other hand by continuous dependence on the initial conditions we also have that
$$
\lim_{k\rightarrow \infty} U_{\gamma_k}(r)=U_{\overline \gamma}(r)
\qquad \qquad \mbox{for all\ } r\in[0,\overline R)
$$
and hence $\overline U(r)=U_{\overline\gamma}(r)$ for any $r\in[0,\overline R)$.
This with \eq{condlim} implies
$$ \lim_{r\uparrow \overline R}U'_{\overline\gamma}(r)=0 $$
which is absurd since $U'_{\overline\gamma}(\overline R)<0$.
This completes the proof of the lemma.
\end{proof}

\begin{lemma} \label{supcrit}
Let $n>4$ and $p>(n+4)/(n-4)$ and let
$u$ be a regular solution of $(\ref{Dirichlet})$. Then
$$
u(x)\leq \left(\frac{\lambda^*}{\lambda}\right)^{1/(p-1)}|x|^{-4/(p-1)}-1
\qquad \qquad \mbox{for all\ } x\in B\backslash\{0\}.
$$
\end{lemma}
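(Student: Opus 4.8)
The plan is to write the given solution, after the rescalings already introduced in Section~3, as one of the solutions $u_\gamma$ of the Cauchy problem (\ref{initial}), and then to read the bound off from the two monotonicities at our disposal: monotone dependence of $U_\gamma$ on $\gamma$ (Lemma~\ref{gazgru}(iv)) and monotone dependence of $R_\gamma$ on $\gamma$, with $R_\gamma\to+\infty$ as $\gamma\downarrow\overline\gamma$ (Lemma~\ref{gamgam}). Throughout I treat $u$ as radial, as the rest of Section~3.

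First I would reduce to an inequality for $U_\gamma$. If $u$ is a regular radial solution of (\ref{Dirichlet}) for the parameter $\lambda$, then necessarily $\lambda\le\lambda^*$ (for $\lambda>\lambda^*$ there is no solution at all). Set $M:=u(0)$ and combine (\ref{defU}) with the invariance $U_a(x)=aU(a^{(p-1)/4}x)$: the function obtained by rescaling $(1+M)^{-1}(1+u)$ to the ball $B_{(1+M)^{(p-1)/4}\lambda^{1/4}}$ is a positive radial solution of $\Delta^2U=U^p$ with $U(0)=1$, $U'(0)=U'''(0)=0$, which vanishes together with its gradient on the boundary sphere. By uniqueness for (\ref{initial}) and Lemma~\ref{gazgru} (parts (i)--(iii), together with the fact that for $\gamma\ge0$ the corresponding solution is strictly increasing and hence has no interior critical sphere), this forces $U=U_\gamma$ for some $\gamma\in(\overline\gamma,0)$, with boundary radius $R_\gamma$; keeping track of the scaling factors yields exactly $\lambda=R_\gamma^4U_\gamma(R_\gamma)^{p-1}$ and $1+u(r)=U_\gamma(R_\gamma r)/U_\gamma(R_\gamma)$, i.e.\ $u=u_\gamma$ in the notation of (\ref{defugamma}). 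Substituting this into the claim and using $\lambda^{1/(p-1)}=R_\gamma^{4/(p-1)}U_\gamma(R_\gamma)$, a short manipulation shows that $u(x)\le(\lambda^*/\lambda)^{1/(p-1)}|x|^{-4/(p-1)}-1$ for $x\in B\setminus\{0\}$ is equivalent to
$$\rho^{4/(p-1)}\,U_\gamma(\rho)\ \le\ (\lambda^*)^{1/(p-1)}\qquad\text{for all }\rho\in(0,R_\gamma].$$

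To prove this, fix $\rho\in(0,R_\gamma]$. By Lemma~\ref{gamgam} the map $\gamma\mapsto R_\gamma$ is strictly decreasing on $(\overline\gamma,0)$ with $R_\gamma\to+\infty$ as $\gamma\downarrow\overline\gamma$, and $R_\gamma\to0$ as $\gamma\uparrow0$ (a cheap consequence of the local expansion $U_\gamma(\rho)=1+\tfrac{\gamma}{2}\rho^2+\tfrac{1}{8n(n+2)}\rho^4+o(\rho^4)$, which places the first zero of $U_\gamma'$ near $\rho=\sqrt{-2n(n+2)\gamma}$); hence $\gamma\mapsto R_\gamma$ maps $(\overline\gamma,0)$ bijectively onto $(0,+\infty)$. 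Let $\gamma'\in(\overline\gamma,0)$ be the unique parameter with $R_{\gamma'}=\rho$. Since $R_{\gamma'}=\rho\le R_\gamma$ and $\gamma\mapsto R_\gamma$ is decreasing, $\gamma'\ge\gamma$, so Lemma~\ref{gazgru}(iv) gives $U_\gamma(\rho)\le U_{\gamma'}(\rho)$. Therefore
$$\rho^{4/(p-1)}U_\gamma(\rho)\ \le\ R_{\gamma'}^{4/(p-1)}U_{\gamma'}(R_{\gamma'})\ =\ \bigl(R_{\gamma'}^4U_{\gamma'}(R_{\gamma'})^{p-1}\bigr)^{1/(p-1)}.$$
By the computation leading to (\ref{Dirichlet2}), $R_{\gamma'}^4U_{\gamma'}(R_{\gamma'})^{p-1}$ is precisely the eigenvalue parameter for which $u_{\gamma'}$ solves (\ref{Dirichlet}); being such a parameter it is $\le\lambda^*$, and the displayed inequality — hence the lemma — follows.

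I expect the only genuinely delicate point to be the first step: verifying that an arbitrary regular radial solution of (\ref{Dirichlet}) is indeed one of the $u_\gamma$ with $\gamma\in(\overline\gamma,0)$ (equivalently, that the cases $\gamma\ge0$ and $\gamma\le\overline\gamma$ in (\ref{initial}) cannot, after rescaling, produce a solution that vanishes together with its gradient on a sphere while remaining positive inside). This is exactly where Lemma~\ref{gazgru} is used in full strength; once it is in place, the rest is the two-line monotonicity comparison above, and no further input on $\lambda^*$ (such as its exact value, or regularity of $u^*$) is needed.
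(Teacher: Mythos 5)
Your proof is correct, but it takes a genuinely different route from the paper's. The paper does not identify $U$ with a member of the $\gamma$-family at all: it simply sets $M=\max_{r\in[0,R]}r^{4/(p-1)}U(r)$, picks $\overline R$ where the max is attained, observes that $w(r)=U(\overline R r)/U(\overline R)-1$ is a nonnegative supersolution of the Dirichlet problem for the parameter $\overline R^4U(\overline R)^{p-1}$ (with $w=0$, $w'\le0$ on $\partial B$), and concludes $M^{p-1}\le\lambda^*$ by the super-subsolution method. That is a purely pointwise argument; in particular it requires no information on the range of $\gamma\mapsto R_\gamma$. Your version replaces the supersolution step by a comparison within the $\gamma$-family: for each $\rho$ you choose $\gamma'$ with $R_{\gamma'}=\rho$ and use Lemma~\ref{gazgru}(iv) to bound $U_\gamma(\rho)\le U_{\gamma'}(R_{\gamma'})$, whose associated $\lambda_{\gamma'}$ is then $\le\lambda^*$ because $u_{\gamma'}$ is an actual solution (not merely a supersolution). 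This is slicker in that it avoids invoking the super-subsolution method, but it costs you the extra fact that $\gamma\mapsto R_\gamma$ maps $(\overline\gamma,0)$ onto all of $(0,\infty)$, i.e.\ $R_\gamma\to0$ as $\gamma\uparrow0$ together with continuity of $R_\gamma$ — neither of which is in Lemma~\ref{gamgam}. Your Taylor-expansion justification of $R_\gamma\to0$ is correct (the coefficient $U_\gamma^{(4)}(0)=3/(n(n+2))$ is right and uniform in $\gamma$ by smooth dependence on initial data), but it should be stated as a lemma rather than "a cheap consequence." Likewise, the identification step — that a radial regular solution of (\ref{Dirichlet}) must, after rescaling, be some $U_\gamma$ with $\gamma\in(\overline\gamma,0)$ and that the Dirichlet boundary radius must equal $R_\gamma$ — is correct but deserves a sentence more care: one needs that $U_\gamma'$ does not vanish on $(0,\infty)$ when $\gamma\ge0$ (which you note) and when $\gamma\le\overline\gamma$ (Lemma~\ref{gazgru}(i),(ii)), and that $R_\gamma$ is the \emph{unique} zero of $U_\gamma'$ in case (iii), so $R=R_\gamma$ indeed. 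In short: correct, different in method, and slightly heavier in prerequisites than the paper's argument.
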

\begin{proof}
Let $u$ be a regular solution of (\ref{Dirichlet}) for some $\lambda>0$ and define
the rescaled function
\neweq{defUbis}
U(x)=\frac{1}{1+u(0)}
\left[1+u\left(\frac{x}{\sqrt[4]\lambda(1+u(0))^{\frac{p-1}{4}}} \right)
\right]
\endeq
so that $U$ satisfies
\neweq{eqiniz}
\Delta^2 U=U^p \ \ \mbox{in} \ \ B_R(0) \qquad \mbox{and} \qquad  U(0)=1
\endeq
where we put $R=\sqrt[4]\lambda(1+u(0))^{\frac{p-1}{4}}$.

Define
$$ M=\max_{r\in[0,R]} r^{4/(p-1)}U(r)$$
and let $\overline R\in(0,R]$ be such that $\overline R^{4/(p-1)}U(\overline R)=M$.
If we define
$$ w(r)=\frac{U(\overline R r)}{U(\overline R)}-1 $$
then $w$ solves the problem

$$
\left\{
\begin{tabular}{ll}
$\Delta^2 w=\overline R^4 U(\overline R)^{p-1}(1+w)^p$ & $\qquad \mbox{in} \ B$ \\
$w=0$ & $ \qquad \mbox{on} \ \partial B$ \\
$w'\leq 0$ & $ \qquad \mbox{on} \ \partial B.$
\end{tabular}
\right.
$$
This proves that $M^{p-1}=\overline R^4 U(\overline R)^{p-1}\leq \lambda^*$
since otherwise by the super-subsolution
method (see \cite[Lemma 3.3]{bg} for more details)
we would obtain a solution of (\ref{Dirichlet}) for
$\lambda=\overline R^4 U(\overline R)^{p-1}>\lambda^*$.
This yields for all $r\in[0,R]$ that
\neweq{BOU}
U(r)\leq M r^{-4/(p-1)} \leq (\lambda^*)^{1/(p-1)} r^{-4/(p-1)} .
\endeq
Then reversing the identity \eq{defUbis}, by \eq{BOU} we obtain
$$
u(r)= \lambda^{-1/(p-1)} R^{4/(p-1)} U(Rr) -1\leq
\left(\frac{\lambda^*}{\lambda}\right)^{1/(p-1)}r^{-4/(p-1)}-1
$$
which completes the proof of the lemma.
\end{proof}

\noindent
{\it Proof of Theorem~\ref{exss}.}
For $\gamma\in(\overline \gamma,0)$ consider the corresponding solution $U_\gamma$
of the Cauchy problem
(\ref{initial}) and the function $u_\gamma$ introduced in \eq{defugamma}.
If we put $\lambda_\gamma=R_\gamma^4 U_\gamma(R_\gamma)^{p-1}$ then
by \eq{Dirichlet2} we have that $u_\gamma$ solves
\begin{equation}\label{Dirichlet3}
\left\{
\begin{array}{ll}
\Delta^2 u_\gamma=\lambda_\gamma(1+u_\gamma)^ p \ & \mbox{in }B,\\
u_\gamma=|\nabla u_\gamma|=0 & \mbox{on }\partial B.
\end{array}
\right.
\end{equation}
We show that $\lambda_\gamma$ remains bounded away from zero for
$\gamma>\overline \gamma$ sufficiently close
to $\overline \gamma$, which is defined in Lemma~\ref{gazgru}.
By \cite[Theorem 3]{GazzolaGrunau} we infer that for a fixed $\eps\in(0,K_0^{1/(p-1)})$
there exists a corresponding $r_\eps>0$ such that
\neweq{k0-eps}
U_{\overline \gamma}(r)>(K_0^{1/(p-1)}-\eps)r^{-4/(p-1)}
\qquad \qquad \mbox{for all\ } r>r_\eps.
\endeq
On the other hand, by Lemma \ref{gamgam}, we deduce that there exists $\gamma_0\in(\overline \gamma,0)$
such that for any $\gamma\in(\overline\gamma,\gamma_0)$ then $R_\gamma>r_\eps$.
Therefore by Lemma \ref{gazgru} (iv) we obtain for all $ \gamma\in(\overline\gamma,\gamma_0)$
$$ U_\gamma(R_\gamma)>U_{\overline\gamma}(R_\gamma)>(K_0^{1/(p-1)}-\eps)R_\gamma^{-4/(p-1)}
$$
and this yields
\neweq{stimlam}
 \forall \gamma\in(\overline\gamma,\gamma_0):\qquad
\lambda_\gamma>(K_0^{1/(p-1)}-\eps)^{p-1}=:C.
\endeq
Combining \eq{stimlam} and Lemma \ref{supcrit} we obtain
for all $ \gamma\in(\overline\gamma,\gamma_0)$, $x\in B\backslash\{0\}$
\neweq{unifest2}
u_\gamma(x)\leq \left(\frac{\lambda^*}{C}\right)^{1/(p-1)}|x|^{-4/(p-1)}-1.
\endeq
Since $u_\gamma$ solves \eq{Dirichlet3}, by \eq{unifest2} we obtain
$$\int_B |\Delta u_\gamma|^2 dx=\lambda_\gamma\int_B (1+u_\gamma)^p u_\gamma dx
\leq \lambda^* \int_B (1+u_\gamma)^{p+1} dx
\leq \frac{(\lambda^*)^{\frac{2p}{p-1}}}{C^{\frac{p+1}{p-1}}}
 \int_B |x|^{-\frac{4(p+1)}{p-1}} dx<+\infty
$$
since $p>(n+4)/(n-4)$.
This proves that the set $\{u_\gamma:\gamma\in(\overline\gamma,\gamma_0)\}$ is bounded
in $H^2_0(B)$ and hence there exists a sequence $\gamma_k\downarrow\overline\gamma$ and a function
$u\in H^2_0(B)$ such that $u_{\gamma_k}\weak u$ in $H^2_0(B)$.
Moreover, by (\ref{unifest2}) and applying Lebesgue's theorem,
$u$ weakly solves \eq{Dirichlet} for a suitable $\widetilde\lambda\geq C$.

It remains to prove that the function $u$ is unbounded. For simplicity, in the rest of the proof
$u_{\gamma_k}, U_{\gamma_k}, R_{\gamma_k}, \lambda_{\gamma_k}$ will be denoted respectively
by $u_k, U_k, R_k, \lambda_k$.

By compact embedding we have that $u_k\rightarrow u$ in $L^1(B)$ and hence we have
$$
\lim_{r\downarrow 0} \frac{1}{|B_r(0)|}\int_{B_r(0)} u(x) dx
=\lim_{r\downarrow 0} \left[ \frac{1}{r^n |B|}
\lim_{k\rightarrow \infty} \int_{B_r(0)} u_k(x) dx \right]
$$
and passing to radial coordinates, by \eq{defugamma} and Lemma \ref{gazgru} (iv), we obtain
\begin{eqnarray}
\lim_{r\downarrow 0} \frac{1}{|B_r(0)|}\int_{B_r(0)} u(x) dx
&=&\lim_{r\downarrow 0} \left[ -1+\frac{n}{r^n} \lim_{k\rightarrow \infty}
\int_0^r \frac{U_k(R_k\rho)}{U_k(R_k)}\rho^{n-1}d\rho \right] \notag\\
&=& \lim_{r\downarrow 0} \left[ -1+\frac{n}{r^n} \lim_{k\rightarrow \infty}
\frac{1}{R_k^nU_k(R_k)} \int_0^{R_k r} U_k(\rho)\rho^{n-1}d\rho \right]
\nonumber\\
&\geq&
\lim_{r\downarrow 0} \left[
-1+\frac{n}{r^n}
\lim_{k\rightarrow \infty}
\frac{1}{R_k^nU_k(R_k)} \int_0^{R_k r} U_{\overline\gamma}(\rho)\rho^{n-1}d\rho \right].
\label{valormedio}
\end{eqnarray}
By \eq{k0-eps} we have that there exist $C,R_0>0$ such that
\neweq{CR0}
 \forall \rho\in(R_0,\infty):\qquad
U_{\overline \gamma}(\rho)>C\rho^{-4/(p-1)}.
\endeq
Hence we have for $k>\overline k=\overline k(r)$
\neweq{stimbasso}
\int_0^{R_kr} U_{\overline \gamma}(\rho)\rho^{n-1}d\rho
\geq \int_0^{R_0} U_{\overline \gamma}(\rho)\rho^{n-1}d\rho
+C\left(n-\frac{4}{p-1}\right)^{-1}\left(R_k^{n-\frac{4}{p-1}}r^{n-\frac{4}{p-1}}-R_0^{n-\frac{4}{p-1}}
\right).
\endeq
Since $p>(n+4)/(n-4)>(n+4)/n$ and since by \eq{stimlam}, $\lambda_k$ is bounded away from zero
as $k\rightarrow\infty$ then
\begin{equation*}
\lim_{k\rightarrow\infty} R_k^nU_k(R_k)= \lim_{k\rightarrow\infty}
R_k^{n-\frac{4}{p-1}} \lambda_k^{\frac{1}{p-1}} = +\infty
\end{equation*}
and hence by \eq{stimbasso} we obtain
$$
\lim_{k\rightarrow \infty}
\frac{1}{R_k^nU_k(R_k)} \int_0^{R_k r} U_{\overline\gamma}(\rho)\rho^{n-1}d\rho
\geq
\underset{k\rightarrow\infty}{\lim\inf}
\frac{C}{\left(n-\frac{4}{p-1}\right)R_k^nU_k(R_k)}
\left(R_k^{n-\frac{4}{p-1}}r^{n-\frac{4}{p-1}}-R_0^{n-\frac{4}{p-1}} \right)
$$
\neweq{liminf}
=\underset{k\rightarrow\infty}{\lim\inf}
\frac{Cr^{n-\frac{4}{p-1}}}{\left(n-\frac{4}{p-1}\right)\lambda_k^{1/(p-1)}}
\geq
\frac{Cr^{n-4/(p-1)}}{\left(n-\frac{4}{p-1}\right)(\lambda^*)^{1/(p-1)}}
=:\widetilde C r^{n-4/(p-1)}.
\endeq
Inserting \eq{liminf} in \eq{valormedio} we obtain
$$
\lim_{r\downarrow 0} \frac{1}{|B_r(0)|}\int_{B_r(0)} u(x) dx
\geq
\lim_{r\downarrow 0}(-1+n\widetilde C r^{-4/(p-1)})=+\infty.
$$
This proves that $u\notin L^\infty(B)$.
\hfill $\square$

\bigskip


\bigskip

\noindent
{\it Proof of Theorem~\ref{theorem2}.}
We make use of an idea from \cite{DDGM}.
Let $u_{\lambda}$ denote the positive minimal regular solution
of (\ref{Dirichlet}) for $0\le\lambda<\lambda^*$. According
to \cite[Theorem 2]{FerreroGrunau}, these are stable so that one has
in particular:
$$
\forall \varphi \in C^{\infty}_0 (B):\quad  \int_B \left(\Delta\varphi(x)\right)^2\,dx
-p\lambda\int_B (1+u_\lambda(x))^{p-1}\varphi(x)^2\,dx\ge 0.
$$
By taking the monotone limit we obtain that
\begin{equation}\label{stability}
\forall \varphi \in C^{\infty}_0 (B):\quad  \int_B \left(\Delta\varphi(x)\right)^2\,dx
-p\lambda^*\int_B (1+u^*(x))^{p-1}\varphi(x)^2\,dx\ge 0.
\end{equation}
We assume now for contradiction that $u^*$ is singular. Then, according
to \cite[Theorem 5]{FerreroGrunau} we have the following
estimate from below:
\begin{equation}
u^*(x) >\left( \frac{K_0}{\lambda^*} \right)^{1/(p-1)}|x|^{-4/(p-1)}-1.
\end{equation}
Combining this with (\ref{stability}) yields
\begin{equation}
\forall \varphi \in C^{\infty}_0 (B):\quad  \int_B \left(\Delta\varphi(x)\right)^2\,dx
\ge pK_0\int_B |x|^{-4} \varphi(x)^2\,dx.
\end{equation}
However under the subcriticality assumptions made we have that
$pK_0>n^2(n-4)^2/16$. This contradicts the optimality of
the constant in Hardy's inequality
$$
\forall \varphi \in C^{\infty}_0 (B):\quad  \int_B \left(\Delta\varphi(x)\right)^2\,dx
\ge \frac{n^2(n-4)^2}{16}\int_B |x|^{-4} \varphi(x)^2\,dx,
$$
so that $u^*$ has indeed to be regular.
\hfill$\square$

\vspace{4mm}\noindent
{\bf Acknowledgement.}
We are grateful to Gianni Arioli and Filippo Gazzola for fruitful discussions
and interesting numerical experiments.

\end{document}